\newcommand{\id}{\mathrm{id}}
\newcommand{\im}{\mathrm{im}}
\newcommand{\CA}{\mathrm{CA}}
\theoremstyle{plain}
\newtheorem{corollary}{Corollary}
\newtheorem{lemma}{Lemma}
\newtheorem{problem}{Problem}
\newtheorem{theorem}{Theorem}
\theoremstyle{definition}
\newtheorem{definition}{Definition}
\newtheorem{example}{Example}
\newtheorem{remark}{Remark}
\begin{document}

\title{Idempotent cellular automata and their natural order}
\author[1]{Alonso Castillo-Ramirez\footnote{Email: alonso.castillor@academicos.udg.mx}}
\author[1]{Maria G. Maga\~na-Chavez \footnote{Email: maria.magana3917@alumnos.udg.mx }}
\author[2]{Eduardo Veliz-Quintero \footnote{Email: eduardo.veliz9236@alumnos.udg.mx}}
\affil[1]{Centro Universitario de Ciencias Exactas e Ingenier\'ias, Universidad de Guadalajara, M\'exico.}
\affil[2]{Centro Universitario de los Valles, Universidad de Guadalajara, M\'exico.}

\maketitle

\begin{abstract}
Motivated by the search for idempotent cellular automata (CA), we study CA that act almost as the identity unless they read a fixed pattern $p$. We show that constant and symmetrical patterns always generate idempotent CA, and we characterize the quasi-constant patterns that generate idempotent CA. Our results are valid for CA over an arbitrary group $G$. Moreover, we study the semigroup theoretic natural partial order defined on idempotent CA. If $G$ is infinite, we prove that there is an infinite independent set of idempotent CA, and if $G$ has an element of infinite order, we prove that there is an infinite increasing chain of idempotent CA.    
\\

\textbf{Keywords:} cellular automata; idempotent; pattern; natural order on semigroups.       
\end{abstract}

\section{Introduction}\label{intro}

This paper is about the connection between two important concepts in mathematics and computer science. On one hand, \emph{idempotence} is the property of a transformation, or an operation, of being stable after being applied once; this has been widely studied in the context of linear algebra, ring theory, semigroup theory, logic, theoretical computer science, and functional programming. On the other hand, \emph{cellular automata} (CA) are transformations of a discrete space defined by a fixed local rule that is applied homogeneously and in parallel in the whole space; they have been used in discrete complex systems modeling, and are relevant in several areas of mathematics, such as symbolic dynamics \cite{LM95}, where they are also known as \emph{sliding block codes}. Many interesting connections between the theory of CA, group theory and topology have also been studied (see the highly cited book \cite{CSC10}).        

Let $G$ be a group and let $A$ be a finite set. A \emph{configuration over $G$} is a function $x : G \to A$ and a \emph{pattern}, or \emph{block}, over a finite subset $S \subseteq G$ is a function $z : S \to A$. Denote by $A^G$ and $A^S$ the set of all configurations over $G$ and patterns over $S$, respectively. A \emph{cellular automaton} is a transformation $\tau : A^G \to A^G$ defined via a finite subset $S \subseteq G$, called a \emph{memory set} of $\tau$, and a local function $\mu : A^S \to A$. Intuitively, applying $\tau$ to a configuration $x \in A^G$ is the same as applying the local function $\mu : A^S \to A$ homogeneously and in parallel using the shift action of $G$ on $A^G$. In their classical setting, CA are studied when $G = \mathbb{Z}^d$, for $d \geq 1$ (see \cite{Kari}). The integer $d$ is usually called the dimension of the CA. 

A cellular automaton $\tau : A^G \to A^G$ is \emph{idempotent} if
\[ \tau \circ \tau = \tau,  \]
where $\circ$ is the composition of functions. Surprisingly, not much is known about idempotent CA; despite they are not interesting from a dynamical perspective, they are certainly important from an algebraic perspective. In one of the few studies we could find, Ville Salo \cite{Salo} investigates one-dimensional CA that are products of idempotents, and gives a characterization of such CA in terms of their action on periodic points. In \cite[Ex. 1.61]{ExCA}, it was noted that the image of an idempotent CA is always a \emph{subshift of finite type} (c.f. Lemma \ref{le-idem}), which means that it is a subset of $A^G$ defined via a finite set of forbidden patterns. 

The main idea of this paper is to consider CA $\tau_p^a : A^G \to A^G$ defined by a pattern $p : S \to A$ and an element $a \in A$ (see Definition \ref{CA-pattern}). We assume that the group identity $e \in G$ is in $S$. By construction, $\tau_p^a$ acts on $A^G$ almost as the identity function, except that when it reads the pattern $p$, it acts by writing the symbol $a$. This is a simple construction, yet it leads to quite subtle questions. It turns out that $\tau_p^a$ is often, but not always, an idempotent. For example, if $G =\mathbb{Z}$, $A=\{0,1\}$, and $S=\{-2,-1,0,1,2\}$, then the pattern $00010$ defines an idempotent CA, but the pattern $00001$ does not define an idempotent CA (see Table \ref{idemp}). Hence, we propose the problem of characterizing the idempotence of $\tau_p^a$ in terms of $p$ and $a$. The following result is the first step in this direction. 

\begin{theorem}
Let $G$ be a group and let $A$ be a finite set with $\vert A \vert \geq 2$. Let $S \subseteq G$ be a finite subset such that $e \in S$, and let $p : S \to A$ be a pattern. If $p$ is constant (i.e. $p(s) = p(e)$, $\forall s \in S$) or symmetrical (i.e. $S=S^{-1}$ and $p(s) = p(s^{-1})$, $\forall s \in S$), then, for any $a \in A$, the cellular automaton $\tau_p^a : A^G \to A^G$ is idempotent.
\end{theorem}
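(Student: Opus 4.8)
The plan is to show that a single application of $\tau_p^a$ already reaches a fixed point, so that a second application changes nothing. First I would dispose of the trivial case $p(e) = a$: here the local rule rewrites the central symbol $p(e)$ by $a = p(e)$, so $\tau_p^a = \mathrm{id}$ and idempotence is immediate. So assume $p(e) \neq a$. Writing, for a configuration $x \in A^G$ and $g \in G$, that \emph{$p$ occurs at $g$ in $x$} when $x(gs) = p(s)$ for all $s \in S$, the defining rule reads $\tau_p^a(x)(g) = a$ if $p$ occurs at $g$ in $x$, and $\tau_p^a(x)(g) = x(g)$ otherwise. Since CA commute with the shift action of $G$, idempotence fails precisely when there is a configuration $x$ for which, setting $y = \tau_p^a(x)$, the pattern $p$ occurs at $e$ in $y$ while $y(e) \neq a$; by equivariance there is no loss in fixing the position to be $e$.

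Next I would extract the rigid constraints forced by such a hypothetical failure. If $p$ occurs at $e$ in $y$, then $y(e) = p(e) \neq a$, which forces that $p$ did not occur at $e$ in $x$, and hence $y(e) = x(e) = p(e)$. Because $p$ occurs in $y$ but not in $x$ at $e$, at least one coordinate must have been altered by the first application: there is some $s_0 \in S$ with $y(s_0) \neq x(s_0)$, which can only happen if $p$ occurred at $s_0$ in $x$, giving $y(s_0) = a$; combined with $y(s_0) = p(s_0)$ this yields $p(s_0) = a$. Thus any failure of idempotence produces a coordinate $s_0 \in S$ satisfying $p(s_0) = a$ at which $p$ occurs in $x$, i.e. $x(s_0 t) = p(t)$ for all $t \in S$. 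This is the core reformulation, and the two hypotheses of the theorem now kill this configuration from opposite directions.

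For a constant pattern I would simply observe that $p(s_0) = p(e) \neq a$ for every $s_0 \in S$, so no coordinate with $p(s_0) = a$ exists and the failure cannot occur. For a symmetrical pattern I would use the occurrence of $p$ at $s_0$ in $x$ and read off the central coordinate through $t = s_0^{-1}$, which lies in $S$ since $S = S^{-1}$: this gives $x(e) = x(s_0 s_0^{-1}) = p(s_0^{-1}) = p(s_0) = a$, contradicting $x(e) = p(e) \neq a$ established above. In both cases no failing configuration exists, so $\tau_p^a$ is idempotent. I expect the main obstacle to be organizational rather than computational: setting up the equivariant reduction to the position $e$ and correctly tracking which coordinates of $x$ are overwritten, so that the single forced coordinate $s_0$ (and its mirror $s_0^{-1}$ in the symmetric case) is identified cleanly. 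Once the reformulation ``a failure of idempotence creates a fresh occurrence of $p$ whose central value is pinned to $p(e)$'' is in place, both cases are short.
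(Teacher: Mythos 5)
Your proof is correct and follows essentially the same route as the paper: the paper's Lemma 3 establishes exactly your reformulation (a failure of idempotence forces a configuration $x$ with $x(e)=p(e)$ and a fresh occurrence of $p$ at some $s_0\in S\setminus\{e\}$ with $p(s_0)=a$), after which the constant case is the paper's Lemma 4/Corollary 1 and the symmetric case is the paper's Lemma 5, via the identical evaluation at $s_0^{-1}$. Your explicit handling of the degenerate case $a=p(e)$ is a small completeness bonus, since the paper dismisses it earlier by noting $\tau_p^a=\mathrm{id}$ there.
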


We also characterize the idempotency of $\tau_p^a$ when $p : S \to A$ is a \emph{quasi-constant pattern} (i.e., $p$ is not constant but there is $r \in S$ such that $p$ restricted to $S \setminus \{r \}$ is constant). 

\begin{theorem}\label{th-quasi-constant}
With the notation of the previous theorem, let $p : S \to A$ be a quasi-constant pattern with nonconstant term $r \in S$ and let $a \in A \setminus \{ p(e) \}$. Then, $\tau_p^a$ is idempotent if and only if one of the following holds:
\begin{enumerate}
\item $a \neq p(s)$ for all $s \in S$. 
\item $r \neq e$ and $r^2 \in S$. 
\item $r = e$ and $S = S^{-1}$. 
\end{enumerate}
\end{theorem}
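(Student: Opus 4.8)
The plan is to first reduce idempotence to a single, checkable condition on configurations. Writing $\tau = \tau_p^a$ and recalling that for any configuration $y$ one has $\tau(y)(g) = a$ when $y$ exhibits the pattern $p$ at $g$ (i.e. $y(gs) = p(s)$ for all $s \in S$) and $\tau(y)(g) = y(g)$ otherwise, I would observe that $\tau(\tau(x))(g)$ can differ from $\tau(x)(g)$ only when $y := \tau(x)$ exhibits $p$ at $g$; in that case $y(g) = p(e)$ is overwritten by $a$, and since $a \neq p(e)$ this is a genuine difference. Hence $\tau$ is idempotent if and only if no image configuration $\tau(x)$ ever exhibits $p$, and by shift-equivariance ($\tau(g\cdot x) = g \cdot \tau(x)$) this localizes to: $\tau$ is idempotent iff there is no $x \in A^G$ with $\tau(x)\vert_S = p$. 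All three conditions are analyzed through this criterion.

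Next I would record that, because $\tau(x)(s)$ equals either $a$ or $x(s)$, a witness $x$ with $\tau(x)\vert_S = p$ must satisfy, at each $s \in S$: if $p(s) \neq a$ then $x$ does not exhibit $p$ at $s$ and $x(s) = p(s)$; if $p(s) = a$ then $\tau(x)(s) = a$ suffices automatically. This immediately settles Condition 1: if $a \neq p(s)$ for all $s$, every $s$ falls in the first case, forcing $x\vert_S = p$, which is exactly $x$ exhibiting $p$ at $e$, contradicting the requirement at $s = e$; so Condition 1 gives idempotence. When Condition 1 fails, $a$ lies in the image of $p$; writing $c$ for the constant value of $p$ on $S \setminus \{r\}$ and using $a \neq p(e)$, either $r \neq e$ (so $p(e) = c$ and necessarily $a = p(r)$) or $r = e$ (so $a = c$). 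These are the two remaining cases.

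For $r \neq e$ with $a = p(r)$, the only $s \in S$ with $p(s) = a$ is $s = r$, so a witness must have $x(s) = c$ for all $s \in S \setminus \{r\}$ and must realize $\tau(x)(r) = p(r)$. I would show $x(r) = p(r)$ is impossible (it would force $x\vert_S = p$, hence a window at $e$), so the only route is to make $x$ exhibit $p$ at $r$, which forces $x(r^2) = p(r) \neq c$. The crux is the position $r^2$: if $r^2 \in S$ then $r^2 \in S \setminus \{r\}$ demands $x(r^2) = c$, a contradiction, so no witness exists and $\tau$ is idempotent (Condition 2); if $r^2 \notin S$ I would build a witness explicitly, setting $x$ equal to $c$ on $S$, equal to $p(r)$ at $r^2$, and $c$ elsewhere, then check that the unique non-$c$ cell $r^2$ creates a window $p$ exactly at $r$ and at no other cell of $S$, so $\tau(x)\vert_S = p$ and $\tau$ is not idempotent.

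For $r = e$ with $a = c$, the unique $s$ with $p(s) \neq a$ is $s = e$, so any witness has $x(e) = p(e)$ and must avoid a window at $e$; this forces some $u \in S \setminus \{e\}$ with $x(u) \neq c$, whence $\tau(x)(u) = c$ can only arise from a window $p$ at $u$, forcing $x(uv) = p(v)$ for all $v \in S$. Taking $v = u^{-1}$ when $u^{-1} \in S$ forces $x(e) = p(u^{-1}) = c \neq p(e)$, a contradiction; hence if $S = S^{-1}$ no witness exists and $\tau$ is idempotent (Condition 3). Conversely, if $S \neq S^{-1}$ I would pick $t \in S$ with $t^{-1} \notin S$ and define $x$ to exhibit $p$ at $t$ together with $x(e) = p(e)$, verifying that $t^{-1} \notin S$ prevents any conflict at $e$ and that the windows of $x$ occur only at $t$, so $\tau(x)\vert_S = p$ and $\tau$ is not idempotent. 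The main obstacle throughout is the two explicit witness constructions: one must choose the configuration so that it exhibits $p$ at exactly the intended cell and at no cell of $S$ whose value is being copied, and this is precisely where the arithmetic conditions $r^2 \notin S$ and $S \neq S^{-1}$ enter.
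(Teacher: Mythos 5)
Your proposal is correct and takes essentially the same route as the paper: you reduce idempotence to the nonexistence of a configuration $x$ with $\tau_p^a(x)\vert_S = p$ (the paper's Lemma \ref{le-2}), dispose of condition (1) by forcing $x\vert_S = p$ and hence a forbidden window at $e$ (Lemma \ref{le-main}), and settle the two remaining cases $r \neq e,\ a=p(r)$ and $r=e,\ a=c$ with the same contradiction arguments and essentially the same explicit witness configurations the paper uses when $r^2 \notin S$ or $S \neq S^{-1}$. The only cosmetic difference is that you verify the $r=e$, $S=S^{-1}$ case directly rather than invoking the paper's separate lemma on symmetrical patterns.
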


In the second part of this paper, we study the natural partial order on idempotent CA. In general, for any semigroup $(\mathcal{S}, \cdot)$, the \emph{natural partial order} \cite[Sec. 1.8]{semi} defined for idempotents $\tau, \sigma \in \mathcal{S}$ is given as follows:
\[ \tau \leq \sigma \quad \Leftrightarrow \quad \tau \cdot \sigma = \sigma \cdot \tau = \tau. \]
This well-known partial order is said to be \emph{natural} because it is defined in terms of the operation of the semigroup $(\mathcal{S},\cdot)$. In 1952, Vagner generalized this to all the elements of an inverse semigroup (c.f. \cite[Sec. 7.1]{semi}), while, in 1980, Hartwig \cite{H80} and Nambooripad \cite{N80} independently generalized it to all the regular elements of a semigroup. Finally, in 1986, Mitsch \cite{M86} generalized this natural order to all the elements of any semigroup. 

The motivation behind these natural orders is that they may give information of the structure of the semigroup. In our setting, the set $\CA(G;A)$, consisting of all CA over $A^G$, is a semigroup equipped with the composition of functions. When $G$ is an infinite group such as the additive group of integers, the semigroup $\CA(G;A)$ is known to be quite intricate (e.g. it is not finitely generated \cite{Cas1, Cas2} and it contains an isomorphic copy to every finite group \cite[Theorem 6.13]{Hed}). 

In \cite[Sec. 5]{CRMC}, the natural order on idempotent \emph{elementary cellular automata} (i.e., one-dimensional cellular automata that admit a memory set $\{ -1, 0, 1\} \subseteq \mathbb{Z}$) has been studied. In this paper, for an arbitrary group $G$, we characterize the natural order of idempotent CA defined by patterns in terms of their images and kernels (see Theorem \ref{order-char}). This allows us to obtain the following result.     

\begin{theorem}
Let $G$ be a group and let $A$ be a finite set with $\vert A \vert \geq 2$.
\begin{enumerate}
\item If $G$ is infinite, there is an infinite set of independent (i.e. not pairwise comparable) idempotents in $\CA(G;A)$. 
\item If $G$ has an element of infinite order, there is an infinite increasing chain of idempotents in $\CA(G;A)$.
\end{enumerate}
\end{theorem}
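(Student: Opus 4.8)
The plan is to leverage the construction $\tau_p^a$ together with the characterization theorems already established. For both parts, the strategy is to exhibit explicit families of idempotent CA of the form $\tau_p^a$ and then control their natural-order relations using the characterization of $\leq$ in terms of images and kernels (Theorem \ref{order-char}). The key observation is that distinct patterns supported on disjoint or translated regions tend to produce CA whose interaction is easy to analyze, because $\tau_p^a$ differs from the identity only on configurations containing the pattern $p$.

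For part (1), since $G$ is infinite, I would fix a symmetric finite memory set (say $S = \{e\}$ together with a suitable neighborhood) and choose an infinite sequence of group elements $g_1, g_2, \dots$ that are ``spread out'' enough that the translated patterns do not overlap or interfere. Concretely, I would use constant or symmetrical patterns (which are idempotent by the first theorem) placed at translates $g_i S$, producing idempotents $\tau_1, \tau_2, \dots$. The goal is to show these form an independent set: for $i \neq j$, neither $\tau_i \leq \tau_j$ nor $\tau_j \leq \tau_i$. Using Theorem \ref{order-char}, incomparability should follow from the fact that the image (a subshift of finite type forbidding one pattern) of $\tau_i$ neither contains nor is contained in that of $\tau_j$, because the forbidden patterns sit at different, non-nested locations. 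The main technical point is choosing the $g_i$ so that no containment of images or kernels accidentally holds.

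For part (2), the hypothesis that $G$ has an element $t$ of infinite order lets me build a genuine chain rather than an antichain. The natural idea is to use the infinite cyclic subgroup $\langle t \rangle$ to define a nested sequence of patterns (or sets of forbidden patterns) so that the corresponding images $\im(\tau_1) \supseteq \im(\tau_2) \supseteq \cdots$ form a strictly decreasing chain of subshifts, while simultaneously arranging the kernels to be compatible with the order condition $\tau_i \tau_j = \tau_j \tau_i = \tau_i$ for $i \leq j$. Here I would likely take patterns whose ``write'' behavior at $a$ accumulates along the powers of $t$, so that applying a larger automaton strictly refines the image. Verifying strictness of the chain reduces to exhibiting, at each stage, a configuration in one image but not the next, which the infinite order of $t$ guarantees by giving infinitely many distinct translate positions.

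The hard part will be part (2): ensuring that the chain is genuinely increasing in the natural order requires simultaneously controlling both the image inclusion \emph{and} the commutation/absorption identity $\tau_i \tau_j = \tau_j \tau_i = \tau_i$, and these two requirements pull in opposite directions. Making the CA interact correctly under composition — so that the smaller element is absorbed by the larger from both sides — is delicate, since composing two pattern-CA can create new patterns that neither factor writes on its own. I expect the cleanest route is to design the patterns so that the image of each $\tau_i$ is exactly the fixed-point set $\Fix(\tau_j)$ for all $j \geq i$, which makes the absorption identities automatic; reducing the problem to this fixed-point containment is where most of the care will be needed.
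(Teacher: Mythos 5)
Your high-level framework (build explicit families of pattern-CA and control comparability via Theorem \ref{order-char}) matches the paper's, but the proposal stops at the level of a plan and the specific choices you sketch would not work as stated. For part (1), placing ``the same constant or symmetrical pattern at translates $g_iS$'' cannot separate the images: the subshift $X_p$ forbids \emph{all} translates of $p$, so a pattern and any translate of it define the same subshift, hence idempotents built from translated copies of one pattern all have the same image and your claimed incomparability of images (``because the forbidden patterns sit at different, non-nested locations'') evaporates. You would instead need genuinely different patterns, and then verifying that neither $X_{p_i}\subseteq X_{p_j}$ nor the reverse holds is exactly the technical point you defer. The paper avoids this entirely with Lemma \ref{le-comp}: if $\tau_p^a\le\tau_q^b$ (and $q(e)\neq a$) then $p\le q$ in the pattern order, so it suffices to pick symmetrical patterns $p_i$ on nested domains $S_i=\{e,g_j^{\pm1}:j\le i\}$ with $p_i=1$ only at $g_i^{\pm1}$; these are pairwise incomparable as patterns, and incomparability of the CA is immediate. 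Nothing in your sketch plays the role of this necessary condition.

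For part (2) there are two concrete problems. First, you assert the images should form a \emph{decreasing} chain $\im(\tau_1)\supseteq\im(\tau_2)\supseteq\cdots$; by Lemma \ref{le-im-ker}, $\tau\le\sigma$ forces $\im(\tau)\subseteq\im(\sigma)$, so an increasing chain of idempotents must have \emph{increasing} images (the paper uses nested constant patterns $p_i\le p_{i+1}$, giving $X_{p_i}\subseteq X_{p_{i+1}}$). Relatedly, your proposed endgame --- arranging $\im(\tau_i)=\Fix(\tau_j)$ for all $j\ge i$ --- would force all images to coincide (since $\Fix(\tau_j)=\im(\tau_j)$ for idempotents) and hence, by the corollary to Theorem \ref{order-char}, would destroy strictness of the chain; what is actually needed is the containment $\im(\tau_i)\subseteq\Fix(\tau_j)$, which yields $\tau_j\tau_i=\tau_i$ only. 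Second, the genuinely hard half, the kernel containment $\ker(\tau_{i+1})\subseteq\ker(\tau_i)$ (equivalently $\tau_i\tau_{i+1}=\tau_i$), is left entirely open, and it is here that the construction is sensitive: the paper's preceding example shows that constant patterns on the symmetric domains $\{e,s\}$ versus $\{e,s,s^{-1}\}$ \emph{fail} this containment, which is precisely why the paper takes the one-sided domains $S_i=\{e,s,\dots,s^i\}$ and then verifies $\tau_i=\tau_i\tau_{i+1}$ by a case analysis on the local rules. Without specifying the domains and carrying out that verification, the chain is not established.
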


The structure of this paper is as follows. In Section 2, we review some basic concepts in the theory of CA, such as the minimal memory set and the composition of two CA. In Section 3, we introduce CA defined by patterns, and study the question of their idempotency. Finally, in Section 3, we study the natural order on idempotent CA.

\section{Basic result}

In this section, we define the basic concepts on the theory of cellular automata (see \cite[Ch. 1]{CSC10}). Let $A$ be a finite set, let $G$ be a group, and let $S$ be a finite subset of $G$. We shall usually assume that the elements of $S$ are given in some order, say $S = \{s_1, \dots, s_n\}$, so we may use the following notation for a pattern $z : S \to A$:
\[  z = (z(s))_{s \in S} = z(s_1) z(s_2) \dots z(s_n).  \]

\begin{definition}
The \emph{shift action} of $G$ on $A^G$ is a function $\cdot : G \times A^G \to A^G$ defined by 
\[ (g \cdot x)(h) := x(g^{-1}h), \quad \forall x \in A^G, g, h \in G. \]
\end{definition}

The shift action is indeed a group action in the sense that $e \cdot x = x$, for all $x \in A^G$, where $e$ is the identity element of $G$, and $g \cdot (h \cdot x) = gh \cdot x$, for all $x \in A^G$, $g,h \in G$ (see \cite[p. 2]{CSC10}). 

\begin{example}
Let $G := \mathbb{Z}$ and $A := \{ 0,1 \}$. The configuration space $A^\mathbb{Z}$ may be identified with the set of bi-infinite binary sequences via the equality
\[ x = \dots x_{-2} x_{-1} x_{0} x_{1} x_{2} \dots \]
for all $x \in A^\mathbb{Z}$, where $x_{k} := x(k) \in A$. The shift action of $\mathbb{Z}$ on $A^\mathbb{Z}$ is equivalent to left and right shifts of the bi-infinite sequences. For example,
\[ 1 \cdot x = \dots x_{-3} x_{-2} x_{-1} x_{0} x_{1} \dots  \] 
For any $k \in \mathbb{Z}$, the bi-infinite sequence $k \cdot x$ is centered at $x_{-k}$. 
\end{example}

\begin{remark}
For any $g \in G$ and $x \in A^G$, we may think of $(g^{-1} \cdot x) \in A^G$ as the configuration $x$ ``centered'' in $g$, since $(g^{-1}\cdot x)(e) = x(g)$. 
\end{remark}

A \emph{subshift} $X$ of $A^G$ is a set of configurations that is $G$-invariant (i.e. $g \cdot x \in X$ for all $g \in G$, $x \in X$) and closed in the \emph{prodiscrete topology} of $A^G$ (i.e., the product topology of the discrete topology of $A$). It turns out that every subshift of $A^G$ may be defined via a set of \emph{forbidden patterns} (see \cite[Ex. 1.47]{ExCA}). A subshift is said to be of \emph{finite type} if it may be defined via a finite set of forbidden patterns. In the next section, we shall be particularly interested in subshifts $X_p \subseteq A^G$ defined by a single forbidden pattern $p : S \to A$; explicitly,
 \[ X_p := \{ x \in A^G : (g^{-1} \cdot x) \vert_S \neq p, \ \forall g \in G \}. \]

\begin{example}
For $G=\mathbb{Z}$ and $A= \{0,1\}$, consider the pattern $p : \{ 0,1\} \to A$ given by $p = 11$. The subshift $X_{11}$ is the so-called \emph{golden mean shift} \cite[Ex. 1.2.3.]{LM95}, as its entropy is the logarithm of the golden mean \cite[Ex. 4.1.4.]{LM95}.	
\end{example}

\begin{definition}[Def. 1.4.1 in \cite{CSC10}]
A \emph{cellular automaton} is a transformation $\tau : A^G \to A^G$ such that there exists a finite subset $S \subseteq G$, called a \emph{memory set} of $\tau$, and a \emph{local function}, or \emph{local rule}, $\mu : A^S \to A$, such that 
\[ \tau(x)(g) = \mu( (g^{-1} \cdot x) \vert_S ), \quad \forall x \in A^G, g \in G. \]
\end{definition}

Every cellular automaton $\tau : A^G \to A^G$ is \emph{$G$-equivariant} in the sense that $\tau(g \cdot x) = g \cdot \tau(x)$, for all $g \in G$, $x \in A^G$.

\begin{example}
Let $G := \mathbb{Z}$, $A := \{ 0,1 \}$ and $S := \{-1,0,1\}$. We identify the elements of $A^S$ with triplets in $x_{-1}x_0x_1 \in A^3$. Given a local function $\mu : A^S \to A$, the respective cellular automaton $\tau :A^\mathbb{Z} \to A^\mathbb{Z}$ is defined as follows:
\[  \tau( \dots x_{-1}.x_0x_1\dots) = \dots \mu(x_{-2}x_{-1}x_0) \mu(x_{-1}x_0x_1) \mu(x_{0}x_1x_2) \dots.   \]
In this setting, it is common to define a local function $\mu : A^S \to A$ via a table that enlists all the elements of $A^S$. For example, 
\[ \begin{tabular}{c|cccccccc}
$z\in A^S$ & $111$ & $110$ & $101$ & $100$ & $011$ & $010$ & $001$ & $000$ \\ \hline
$\mu(z) \in A$ & $0$ & $1$ & $1$ & $0$ & $1$ & $1$ & $1$ & $0$
\end{tabular}\] 
Cellular automata such as this one, that admit a memory set $S = \{ -1,0,1\} \subseteq \mathbb{Z}$, are known as \emph{elementary cellular automata} \cite[Sec. 2.5]{Kari}, and are labeled by a \emph{Wolfram number}, which is the decimal number corresponding to the second row of the defining table of $\mu : A^S \to A$. In this example, the Wolfram number of $\tau$ is 110. 
\end{example}

A memory set associated with a CA is not unique. For example, if $S \subseteq G$ is a memory set for $\tau : A^G \to A^G$ with local function $\mu : A^S \to A$, then any finite superset $S^\prime \supseteq S$ is also a memory set for $\tau$ via the local function $\mu^\prime : A^{S^\prime} \to A$ defined by $\mu^\prime(z) := \mu(z \vert_{S})$, $\forall z \in A^{S^\prime}$. However, since the intersection of memory sets for $\tau$ is a memory set for $\tau$ \cite[Lemma 1.5.1]{CSC10}, there exists a unique memory set of minimal cardinality for $\tau$, which is the intersection of all memory sets admitted by $\tau$. 

\begin{definition}[Sec. 1.5 in \cite{CSC10}]
The \emph{minimal memory set} of a cellular automaton $\tau : A^G \to A^G$ is the unique memory set of minimal cardinality admitted by $\tau$. 
\end{definition}

\begin{example}
Let $G := \mathbb{Z}$, $A := \{ 0,1 \}$ and $S := \{-1,0,1\}$. Consider the elementary cellular automaton $\tau : A^\mathbb{Z} \to A^\mathbb{Z}$ defined by the local rule $\mu :A^S \to A$ described by the following table: 
\[ \begin{tabular}{c|cccccccc}
$z\in A^S$ & $111$ & $110$ & $101$ & $100$ & $011$ & $010$ & $001$ & $000$ \\ \hline
$\mu(z) \in A$ & $0$ & $1$ & $1$ & $0$ & $0$ & $1$ & $1$ & $0$
\end{tabular}\] 
This has Wolfram number 102. A close inspection allow us to see that $\mu ( x_{-1}x_0x1) = (x_0 + x_1) \mod(2)$; hence, the minimal memory set of $\tau$ is $\{0,1\} \subseteq \mathbb{Z}$ with corresponding local rule $\mu^\prime : A^{\{ 0,1\}} \to A$ described by the following table: 
\[ \begin{tabular}{c|cccc}
$z\in A^{\{0,1 \}}$ & $11$ & $10$ & $01$ & $00$  \\ \hline
$\mu^\prime(z) \in A$ & $0$ & $1$ & $1$ & $0$
\end{tabular}\] 
\end{example}

For any two CA $\tau : A^G \to A^G$ and $\sigma : A^G \to A^G$ with memory sets $T$ and $S$, respectively, the composition $\tau \circ \sigma : A^G \to A^G$ is a CA admitting a memory set $TS := \{ ts : t \in T, s \in S \} \subseteq G$ (see \cite[Prop 1.4.9]{CSC10}). However, even if $T$ and $S$ are the minimal memory sets of $\tau$ and $\sigma$, respectively, the minimal memory set of $\tau \circ \sigma$ may be a proper subset of $TS$ \cite[Ex. 1.27]{ExCA}. If $\mu : A^T \to A$ and $\nu : A^S \to A$, are the local functions of $\tau$ and $\sigma$, respectively, then the local function of $\tau \circ \sigma$ is $\mu \star \nu : A^{TS} \to A$ given by
\[ (\mu \star \nu)( z) = \mu (\nu( z_t )_{t \in T} ), \quad \forall z \in A^{ST}, \]
where $z_t \in A^S$ is defined by $z_t(s) := z(ts)$, for all $t \in T$, $s \in S$ (\cite[Remark 1.4.10]{CSC10}. In terms of configurations, the above identity may be also written as
\[ (\mu \star \nu)( x \vert_{ST }) = \mu (\nu( (t^{-1} \cdot x) \vert_{S} )_{t \in T} ), \quad \forall x \in A^{G}. \]

\begin{example}
Let $G = \mathbb{Z}$ and $S= \{ -1,0,1 \}$. For any $\mu : A^S \to A$ and $\nu : A^S \to A$, then $\mu \star \nu : A^{S+S} \to A$, with $S + S = \{ -2, -1, 0 ,1 ,2\}$, is defined by
\[ (\mu \star \nu )( x_{-2}, x_{-1}, x_0, x_1, x_2) := \mu ( \nu( x_{-2}, x_{-1}, x_{0}) , \nu( x_{-1}, x_{0}, x_{1}), \nu ( x_{0}, x_{1}, x_{2}) ),   \]
for all $( x_{-2}, x_{-1}, x_0, x_1, x_2) \in A^{S+S}$. 
\end{example}


\section{Cellular automata defined by a pattern} 

For the rest of the paper, we shall assume that $\vert A \vert  \geq 2$, and that $\{0,1\} \subseteq A$. 

\begin{definition}\label{CA-pattern}
Let $S$ be a finite subset of $G$ such that $e \in S$ and let $a \in A$. For a pattern $p : S \to A$, define $\mu_p^a : A^S \to A$ by 
\[ \mu_p^a(z) := \begin{cases}
a & \text{ if } z = p \\
z(e) & \text{otherwise}
\end{cases}, \]
for every $z \in A^S$. Let $\tau_p^a : A^{G} \to A^G$ be the cellular automaton defined by the local function $\mu_p^a : A^S \to A$. 
\end{definition}

A generalized version of Definition \ref{CA-pattern}, which involves a finite set of patterns, was used in \cite[Ex. 1.61]{ExCA} to show that for every subshift of finite type $X \subseteq A^G$ there exists a CA $\tau : A^G \to A^G$ whose set of fixed points is precisely $X$. 

Observe that $\tau_p^a$ is the identity function if and only if $a=p(e)$. Hence, we shall assume that $a \neq p(e)$. When $A=\{0,1\}$, we simplify notation by writing $\tau_p := \tau_p^{p(e)^c}$ and $\mu_p:=\mu_p^{p(e)^c}$, where $p(e)^c$ is the complement of $p(e)$. For the rest of the paper, we assume that $S$ is a finite subset of $G$ such that $e \in S$. 

\begin{problem}
Characterize the idempotency of $\tau_p^a : A^{G} \to A^G$ in terms of the pattern $p : S \to A$ and $a \in A \setminus \{ p(e)\}$. 
\end{problem}

An elementary exploration allows us to see that in some cases $\tau_p^a$ is an idempotent and in some cases it is not. 

\begin{example}
For any group $G$, let $S:= \{ e\}$. We claim that for any pattern $p : S \to A$ and any $a \in A$, the cellular automaton $\tau_p^a : A^G \to A^G$ is idempotent. Indeed, we may identify $A^S$ with $A$, so the local rule $\mu_p^a : A^{S} \to A$ is just the transformation of $A$ that maps $p(e)$ to $a$ and fixes all other elements of $A$. The operation $\star$ defined in Section 2 is just the usual composition. Then it is easy to see that $\mu_p^a \star \mu_p^a = \mu_p^a$, so $\tau_p^a$ is an idempotent. The set
\[ \{ \mu_p^a : A \to A : p(e) \in A, a \in A\setminus \{p(e) \} \} \] 
coincides with the set of idempotent transformations $f : A \to A$ of \emph{defect} $1$ (i.e. $\vert \im(f) \vert = \vert A \vert -1$), which is known to generate the semigroup of all non-invertible transformations of $A$ \cite{Howie}. 
\end{example}

\begin{example}
Let $G := \mathbb{Z}$, $A := \{ 0,1 \}$ and $S := \{-1,0,1\}$. Recall that we identify the elements of $A^S$ with triplets in $A^3$. Consider the local rule $\mu_{010} : A^S \to A$ defined by the pattern $p = 010$:
\[ \begin{tabular}{c|cccccccc}
$z \in A^S$ & $111$ & $110$ & $101$ & $100$ & $011$ & $010$ & $001$ & $000$ \\ \hline
$\mu_{010}(z) \in A$ & $1$ & $1$ & $0$ & $0$ & $1$ & $0$ & $0$ & $0$
\end{tabular}\] 
This has Wolfram number 200, and it is an idempotent (c.f. \cite[Sec. 5]{CRMC} ).

On the other hand, the local function $\mu_{100} : A^S \to A$ defined by the pattern $p = 100$ is given by the following table:  
\[ \begin{tabular}{c|cccccccc}
$z \in A^S$ & $111$ & $110$ & $101$ & $100$ & $011$ & $010$ & $001$ & $000$ \\ \hline
$\mu_{100}(z) \in A$ & $1$ & $1$ & $0$ & $1$ & $1$ & $1$ & $0$ & $0$
\end{tabular}\] 
This has Wolfram number 220, and it is not an idempotent. 
\end{example}

\begin{example}
Not all idempotent CA are defined by a pattern: for example, if $\tau : \{0,1\}^\mathbb{Z} \to \{0,1\}^\mathbb{Z}$ is the elementary cellular automaton with Wolfram number $4$ or $223$, then $\tau$ is idempotent but there is no pattern $p : S \to A$ such that $\tau = \tau_p$.  
\end{example}

\begin{table}[!h]\centering
\caption{Idempotency of CA defined by paterns over $S \subseteq \mathbb{Z}$.}\label{idemp}
\begin{tabular}{|c|c|c|}\hline
\textbf{Domain} & \textbf{Idempotent CA} & \textbf{Non-idempotent CA} \\ \hline
$S=\{ -1,0,1 \}$ & $000$, $010$, $101$, $111$ & $001$, $011$, $100$, $110$ \\ \hline

$S = \{ 0,1,2\}$ & $000$, $010$, $101$, $111$ & $001$, $011$, $100$, $110$\\ \hline

$S= \{ -1, 0, 1, 2 \}$ & $00 0 0$, $0 0 1 0$, $0 1 0 1$, $0 1 1 0$ & $0 0 0 1$, $0 0 1 1$, $0 1 0 0$, $0 1 1 1$ \\
 & $1 0 0 1$, $1 0 1 0$, $1 1 0 1$, $1 1 1 1$ & $1 0 0 0$, $1 0 1 1$, $1 1 0 0$, $1 1 1 0$ \\ \hline
 
 $S = \{ 0, 1, 2, 3 \}$ & $0 0 0 0$, $0 1 0 0$, $0 1 1 0$, $1 0 0 1$ &  $0 0 0 1$, $0 0 1 0$, $0 0 1 1$, $0 1 0 1$, $0 1 1 1$     \\ 
& $1 0 1 1$, $1 1 1 1$ & $1 0 0 0$, $1 0 1 0$, $1 1 0 0$, $1 1 0 1$, $1 1 1 0$  \\ \hline

$ S = \{ -2, -1, 0, 1, 2 \}$ & $0 0 0 0 0$, $0 0 0 1 0$, $0 0 1 0 0$, $0 0 1 1 0$, $0 1 0 0 0$ & $0 0 0 0 1$, $0 0 0 1 1$, $0 0 1 0 1$, $0 0 1 1 1$ \\
& $0 1 0 0 1$, $0 1 0 1 0$, $0 1 1 0 0$, $0 1 1 0 1$, $0 1 1 1 0$ & $0 1 0 1 1$, $0 1 1 1 1$, $1 0 0 0 0$, $1 0 1 0 0$ \\
& $1 0 0 0 1$, $1 0 0 1 0$, $1 0 0 1 1$, $1 0 1 0 1$, $1 0 1 1 0$ & $1 1 0 0 0$, $1 1 0 1 0$, $1 1 1 0 0$, $1 1 1 1 0$ \\
& $1 0 1 1 1$, $1 1 0 0 1$, $1 1 0 1 1$, $1 1 1 0 1$, $1 1 1 1 1$ & \\ \hline

$S = \{ -1, 0, 1, 2, 3 \}$ & $0 0 0 0 0$, $0 0 0 1 1$, $0 0 1 0 0$, $0 0 1 1 0$, $0 1 0 0 1$ & $0 0 0 0 1$, $0 0 0 1 0$, $0 0 1 0 1$, $0 0 1 1 1$, $0 1 0 0 0$  \\
 & $0 1 0 1 0$, $0 1 0 1 1$, $0 1 1 0 1$, $0 1 1 1 0$, $1 0 0 0 1$ & $0 1 1 0 0$, $0 1 1 1 1$, $1 0 0 0 0$, $1 0 0 1 1$, $1 0 1 1 1$ \\
 & $1 0 0 1 0$, $1 0 1 0 0$, $1 0 1 0 1$, $1 0 1 1 0$, $1 1 0 0 1$ & $1 1 0 0 0$, $1 1 0 1 0$, $1 1 1 0 1$, $1 1 1 1 0$ \\
 & $1 1 0 1 1$, $1 1 1 0 0$, $1 1 1 1 1$ & \\ \hline
\end{tabular}
\end{table}

In Table \ref{idemp}, we determine the idempotency of $\tau_p : \{0,1\}^{\mathbb{Z}} \to \{0,1\}^{\mathbb{Z}}$ for patterns $p : S \to A$ with a given domain $S \subseteq \mathbb{Z}$. Moreover, the CA defined by patterns with domain $S= \{ -3, -2, -1, 0, 1,2,3 \}$ were studied computationally: exactly 100 of these patterns define an idempotent CA, and the rest 28 patterns define a non-idempotent CA. 

\begin{lemma}
Let $S$ be a finite subset of $G$ such that $e \in S$ and $\vert S \vert \geq 2$. For any pattern $p : S \to A$ and any $a \in A \setminus \{p(e)\}$, the minimal memory set of $\tau_p^a : A^G \to A^G$ is equal to $S$. 
\end{lemma}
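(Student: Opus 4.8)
The plan is to use the description of the minimal memory set as the intersection of all memory sets, together with the standard fact that this intersection is exactly the set of coordinates on which the local function genuinely depends. Since $S$ is a memory set of $\tau_p^a$ by Definition \ref{CA-pattern}, the minimal memory set $M$ satisfies $M \subseteq S$. Let $\mu_M : A^M \to A$ be the local function of $\tau_p^a$ with respect to $M$. Because $\mu_p^a$ and $\mu_M$ induce the same CA, for every $x \in A^G$ we have $\mu_p^a(x\vert_S) = \tau_p^a(x)(e) = \mu_M(x\vert_M)$; letting $x$ range over $A^G$ so that $x\vert_S$ covers all of $A^S$, this yields the factorization $\mu_p^a(z) = \mu_M(z\vert_M)$ for every $z \in A^S$. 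Consequently, if some $s \in S \setminus M$ existed, then any two patterns $z, z' \in A^S$ differing only at $s$ would satisfy $z\vert_M = z'\vert_M$ and hence $\mu_p^a(z) = \mu_p^a(z')$; that is, $\mu_p^a$ would not depend on the coordinate $s$. Thus it suffices to prove that $\mu_p^a$ \emph{does} depend essentially on every coordinate $s \in S$, meaning there exist $z, z' \in A^S$ agreeing off $s$ with $\mu_p^a(z) \neq \mu_p^a(z')$; this forces $s \in M$, giving $S \subseteq M$ and therefore $M = S$.

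I would then verify essential dependence by two cases. For $s \neq e$: since $\vert A \vert \geq 2$, choose $b \in A$ with $b \neq p(s)$ and let $z'$ be the pattern obtained from $p$ by resetting the $s$-entry to $b$; then $z' \neq p$ and, as $s \neq e$, we have $z'(e) = p(e)$, so $\mu_p^a(p) = a$ while $\mu_p^a(z') = z'(e) = p(e)$. Since $a \neq p(e)$ by hypothesis, these differ, and $p$ and $z'$ agree off $s$. For $s = e$: here I use $\vert S \vert \geq 2$ to pick some $t \in S$ with $t \neq e$ and a pattern $w \in A^S$ with $w(t) \neq p(t)$, so that $w \neq p$ regardless of its $e$-entry. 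Define $z, z'$ to agree with $w$ away from $e$ and set $z(e) = 0$, $z'(e) = 1$ (recall $\{0,1\} \subseteq A$); both differ from $p$ at $t$, so $\mu_p^a(z) = 0 \neq 1 = \mu_p^a(z')$, and they agree off $e$.

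There is no serious analytic difficulty here; the main point requiring care is the case $s = e$, which is precisely where the hypothesis $\vert S \vert \geq 2$ is indispensable. Indeed, when $S = \{e\}$ and $\vert A \vert = 2$ the map $\mu_p^a$ collapses to the constant function with value $a$, so $\tau_p^a$ becomes a constant CA with empty minimal memory set; the extra coordinate $t \neq e$ is exactly what lets us keep a pattern distinct from $p$ while freely toggling the $e$-entry, thereby exhibiting dependence on $e$. Once both cases are settled, combining $M \subseteq S$ with $S \subseteq M$ yields $M = S$, completing the proof.
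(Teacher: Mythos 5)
Your proposal is correct and follows essentially the same strategy as the paper's proof: both reduce the claim to showing that $\mu_p^a$ depends essentially on each coordinate of $S$, using the pair $p$ versus $p$ altered at $s$ for $s \neq e$, and a pair of patterns differing only at $e$ but both distinct from $p$ (which is where $\vert S \vert \geq 2$ enters) for $s = e$. The only cosmetic difference is that you phrase the reduction via the factorization of $\mu_p^a$ through the minimal memory set $M$, whereas the paper assumes a proper memory subset $T \subsetneq S$ and derives a contradiction directly.
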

\begin{proof}
It is clear by definition that $S$ is a memory set for $\tau_p^a$. In order to show that it is the minimal memory set, suppose there is a proper subset $T \subset S$ that is a memory set for $\tau_p^a$. Since $\vert S \vert \geq 2$, the definition of $\tau_p^a$ implies that it is not a constant function, so we must have $T \neq \emptyset$.  There is a local function $\mu^\prime : A^T \to A$ such that 
\[ \tau(x)(g) = \mu^\prime ( (g^{-1} \cdot x)\vert_T) = \mu_p^a( (g^{-1} \cdot x)\vert_S), \quad \forall x \in A^G, g \in G. \]
In particular, $\mu^\prime ( x\vert_T) = \mu_p^a( x\vert_S)$ for all $x \in A^G$. We divide the proof in two steps. 
\begin{itemize}
\item First we show that $e \in T$. Suppose that $e \not \in T$. Since $\vert S \vert \geq 2$, there exist $z,w \in A^S$ such that the patterns $p$, $z$ and $w$ are pairwise different and $z \vert_{S \setminus \{e\}} = w \vert_{S \setminus \{e\}}$. Since $e \not \in T$, then $z \vert_T = w \vert_T$, so $\mu^\prime(z \vert_T) = \mu^\prime(w \vert_T)$. However, since $p \neq z$ and $p \neq w$, the definition of $\mu_p^a$ implies that
\[ \mu_p^a(z) = z(e) \neq w(e) = \mu_p^a(w). \] 
This is a contradiction, so $e \in T$. 

\item Now we show that if $s \in S$, $s \neq e$, then $s \in T$. Suppose that $s \not \in T$ and let $z \in A^S$ be such that $z \vert_{S \setminus \{s\}} = p \vert_{S \setminus \{s\}}$ but $z(s) \neq p(s)$. In particular, $z(e) = p(e)$. As $s \not \in T$, then $p \vert_T = z \vert_T$ so $\mu^\prime( p \vert_T) = \mu^\prime( z\vert_T)$, but 
\[ \mu_p^a( p ) = a \neq p(e) = z(e) = \mu_p^a(z). \] 
This is a contradiction, so the result follows. 
\end{itemize}
\end{proof}

In a similar spirit as the previous lemma, \cite{MMS} examines the minimal memory set of cellular automata generated by a finite set of patterns. Recall that $X_p$ is the subshift of $A^G$ defined by the forbidden pattern $p : S \to A$. 

\begin{lemma}\label{le-idem}
Let $\tau_p^a : A^G \to A^G$ be the cellular automaton defined by a pattern $p : S \to A$ and $a \in A \setminus \{ p(e) \}$. Then,
\[ X_p \subseteq \im(\tau_p^a), \]
with equality if and only if $\tau_p^a$ is idempotent. 
\end{lemma}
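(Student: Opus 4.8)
The plan is to identify the subshift $X_p$ with the fixed-point set $\Fix(\tau_p^a)$ and then invoke the elementary fact that a self-map is idempotent precisely when its image coincides with its fixed-point set.

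First I would prove the equality $X_p = \Fix(\tau_p^a)$. For the inclusion $X_p \subseteq \Fix(\tau_p^a)$, take $x \in X_p$; by definition $(g^{-1}\cdot x)\vert_S \neq p$ for every $g \in G$, so the local rule gives $\tau_p^a(x)(g) = \mu_p^a((g^{-1}\cdot x)\vert_S) = (g^{-1}\cdot x)(e) = x(g)$, using the remark that $(g^{-1}\cdot x)(e) = x(g)$. Hence $\tau_p^a(x) = x$. For the reverse inclusion I would argue by contraposition: if $x \notin X_p$, there is some $g$ with $(g^{-1}\cdot x)\vert_S = p$, so $\tau_p^a(x)(g) = \mu_p^a(p) = a$, whereas $x(g) = (g^{-1}\cdot x)(e) = p(e)$. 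Since $a \neq p(e)$ by hypothesis, $\tau_p^a(x)(g) \neq x(g)$, and $x$ is not fixed. This establishes $X_p = \Fix(\tau_p^a)$.

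Next I would record the general, purely set-theoretic fact: for any function $\tau : Y \to Y$ one always has $\Fix(\tau) \subseteq \im(\tau)$ (a fixed point is its own image), and $\tau$ is idempotent if and only if $\im(\tau) = \Fix(\tau)$. The forward direction is immediate, since $\tau(\tau(x)) = \tau(x)$ forces every image point to be fixed; the converse holds because $\tau(x) \in \im(\tau) = \Fix(\tau)$ gives $\tau(\tau(x)) = \tau(x)$ for all $x$. Combining this with the identification of the previous step yields everything at once: $X_p = \Fix(\tau_p^a) \subseteq \im(\tau_p^a)$ always holds, and $X_p = \im(\tau_p^a)$ is equivalent to $\Fix(\tau_p^a) = \im(\tau_p^a)$, i.e. to $\tau_p^a$ being idempotent.

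I do not expect a serious obstacle here; the only delicate point is the inclusion $\Fix(\tau_p^a) \subseteq X_p$, where the standing hypothesis $a \neq p(e)$ is indispensable. If instead $a = p(e)$, then $\tau_p^a$ is the identity and $X_p$ is a proper subset of $\Fix(\tau_p^a) = A^G$, so the identification of fixed points with $X_p$ would break down. Keeping the shift-action bookkeeping $(g^{-1}\cdot x)(e) = x(g)$ straight throughout is the main thing to get right.
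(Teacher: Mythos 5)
Your proposal is correct and follows exactly the same route as the paper: identify $X_p$ with $\Fix(\tau_p^a)$ and then use the elementary fact that a self-map is idempotent if and only if its image equals its fixed-point set. You simply spell out the details (both inclusions of $X_p = \Fix(\tau_p^a)$ and the role of $a \neq p(e)$) that the paper leaves as ``easy to show.''
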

\begin{proof}
It is easy to show that $X_p$ is equal to the set of fixed points of $\tau_p^a$; this is,
\[ X_p = \text{Fix}(\tau_p^a) := \{x \in A^G : \tau_p^a(x) = x \}. \]
Hence, it follows that $X_p \subseteq \im(\tau_p^a)$. The last statement follows because $\tau_p^a$ is idempotent if and only if $\text{Fix}(\tau_p^a) = \im(\tau_p^a)$.
\end{proof}

\begin{lemma}\label{le-2}
Let $\tau_p^a : A^G \to A^G$ be the cellular automaton defined by a pattern $p : S \to A$ and $a \in A \setminus \{p(e)\}$. Then, $\tau_p^a$ is not idempotent if and only if there exists $x \in A^G$ such that 
\begin{equation} \label{eq-x}
\mu_p^a( (s^{-1} \cdot x) \vert_S ) = p(s), \quad \forall s \in S. 
\end{equation}
Furthermore, if such $x \in A^G$ exists, it must also satisfy that
\begin{enumerate}
\item $x \vert_S \neq p$,
\item $x(e) = p(e)$. 
\item $(t^{-1} \cdot x) \vert_S = p$ for some $t \in S - \{e\}$. 
\end{enumerate}
\end{lemma}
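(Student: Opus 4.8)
The plan is to test the identity $\tau_p^a \circ \tau_p^a = \tau_p^a$ coordinatewise and use $G$-equivariance to reduce everything to the single coordinate $e \in G$. Since $\tau_p^a$ is $G$-equivariant, so is $\tau_p^a \circ \tau_p^a$; hence if $(\tau_p^a \circ \tau_p^a)(x)(g) \neq \tau_p^a(x)(g)$ for some $x \in A^G$ and $g \in G$, then replacing $x$ by $g^{-1} \cdot x$ moves the discrepancy to the coordinate $e$, because $(g^{-1} \cdot z)(e) = z(g)$ for any $z \in A^G$. Thus $\tau_p^a$ fails to be idempotent if and only if there is some $x \in A^G$ with $(\tau_p^a \circ \tau_p^a)(x)(e) \neq \tau_p^a(x)(e)$.

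Next I would unwind this single-coordinate condition. Writing $y := \tau_p^a(x)$, we have $(\tau_p^a \circ \tau_p^a)(x)(e) = \mu_p^a(y \vert_S)$ and $y(e) = \mu_p^a(x \vert_S)$. The crucial observation is that, by the definition of $\mu_p^a$ together with the standing assumption $a \neq p(e)$, one has $\mu_p^a(z) \neq z(e)$ if and only if $z = p$. Applying this to $z = y \vert_S$ shows that $\mu_p^a(y \vert_S) \neq y(e)$ precisely when $y \vert_S = p$. Since $y(s) = \mu_p^a((s^{-1} \cdot x) \vert_S)$ for each $s \in S$, the equality $y \vert_S = p$ is exactly equation \eqref{eq-x}, which establishes the stated equivalence.

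For the ``furthermore'' part I would read off the three properties from \eqref{eq-x} together with the identity $(s^{-1} \cdot x)(e) = x(s)$. Taking $s = e$ in \eqref{eq-x} gives $\mu_p^a(x \vert_S) = p(e)$; if $x \vert_S = p$ this would force $a = p(e)$, contradicting $a \neq p(e)$, so property (1) $x \vert_S \neq p$ holds, and then $\mu_p^a(x \vert_S) = x(e)$ yields property (2) $x(e) = p(e)$. For property (3) I would argue by contradiction: if $(t^{-1} \cdot x) \vert_S \neq p$ for every $t \in S \setminus \{e\}$, then together with (1) we get $(s^{-1} \cdot x) \vert_S \neq p$ for all $s \in S$, whence $\mu_p^a((s^{-1} \cdot x) \vert_S) = (s^{-1} \cdot x)(e) = x(s)$; feeding this into \eqref{eq-x} gives $x(s) = p(s)$ for all $s \in S$, i.e. $x \vert_S = p$, contradicting (1).

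The computations involved are light, so there is no single hard calculation. The main points to get right are the equivariance reduction to the coordinate $e$ — being careful with the inverse in the shift action so that one translates by $g^{-1} \cdot x$ rather than $g \cdot x$ — and the repeated use of the equivalence $\mu_p^a(z) \neq z(e) \Leftrightarrow z = p$, which is exactly what makes the pattern $p$ the only possible obstruction to idempotence.
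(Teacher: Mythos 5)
Your proposal is correct and follows essentially the same route as the paper: both reduce, via $G$-equivariance, to the condition $\tau_p^a(x)\vert_S = p$, which unwinds to equation \eqref{eq-x}, and your verification of properties (1)--(3) is identical to the paper's. The only cosmetic difference is that you test idempotence pointwise at the coordinate $e$ directly from the definition, whereas the paper first invokes Lemma \ref{le-idem} to phrase non-idempotence as $X_p \subsetneq \im(\tau_p^a)$; the substance is the same.
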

\begin{proof}
By Lemma \ref{le-idem}, $\tau_p^a$ is not idempotent if and only if $X_p \subsetneq \im(\tau_p^a)$, which means that the there exists $x \in A^G$ such that $p$ is a subpattern of $\tau_p^a(x)$. Since $\tau_p^a$ is $G$-equivariant, we may assume $\tau_p^a(x) \vert_S = p$. In term of its local rule, this is equivalent to equation (\ref{eq-x}). 

In particular, taking $s=e$, we obtain $\mu_p^a( x \vert_S) = p(e)$, which implies that $x \vert_S \neq p$ (as otherwise $\mu_p^a( x \vert_S) = a \neq p(e)$). By the definition of $\mu_p^a$, we have 
\[  \mu_p^a( x \vert_S) = x(e) = p(e). \]

Parts (1.) and (2.) follow. For part (3.), suppose that $(s^{-1} \cdot x) \vert_S \neq p$ for all $s \in S$. By definition of $\mu_p^a$, this implies that for all $s \in S$, 
\[ p(s) = \mu_p^a((s^{-1} \cdot x) \vert_S) = (s^{-1} \cdot x)(e) = x(s),   \]
which contradicts that $x \vert_S \neq p$. Therefore, there exists $t \in S \setminus \{ e \}$ such that $(t^{-1} \cdot x) \vert_S = p$. 
\end{proof}

\begin{lemma}\label{le-main}
Let $S$ be a finite subset of $G$ with $\vert S \vert \geq 2$. Let $p : S \to A$ and suppose there exists $a \in A \setminus \{ p(e)\}$ such that $a \neq p(s)$ for all $ s \in S$. Then $\tau_p^a : A^G \to A^G$ is idempotent.  
\end{lemma}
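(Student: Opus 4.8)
The plan is to argue by contradiction and let the characterization in Lemma~\ref{le-2} do the heavy lifting. Suppose $\tau_p^a$ is not idempotent. Since $\vert S \vert \geq 2$ and $a \in A \setminus \{p(e)\}$, Lemma~\ref{le-2} furnishes a configuration $x \in A^G$ satisfying the identity (\ref{eq-x}), that is, $\mu_p^a((s^{-1}\cdot x)\vert_S) = p(s)$ for every $s \in S$; furthermore, part (3) of that lemma provides an element $t \in S \setminus \{e\}$ such that $(t^{-1}\cdot x)\vert_S = p$.

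The key step is to evaluate the identity (\ref{eq-x}) at the distinguished index $s = t$, that is, precisely where the recentred configuration reads the pattern $p$. This yields $\mu_p^a((t^{-1}\cdot x)\vert_S) = p(t)$. Since $(t^{-1}\cdot x)\vert_S = p$, the definition of $\mu_p^a$ (Definition~\ref{CA-pattern}) forces the left-hand side to equal $a$, so we conclude $p(t) = a$. As $t \in S$, this contradicts the standing hypothesis that $a \neq p(s)$ for all $s \in S$. Hence $\tau_p^a$ must be idempotent.

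I expect no genuine obstacle here, since Lemmas~\ref{le-idem} and~\ref{le-2} have already reduced the question to a single substitution. The only point deserving care is to confirm that part (3) of Lemma~\ref{le-2} really returns an index $t$ lying in $S$ (rather than merely somewhere in $G$), so that the expression $p(t)$ is defined and the hypothesis on $a$ can be invoked; once this is checked, the contradiction is immediate.
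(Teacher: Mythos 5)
Your argument is correct and is essentially identical to the paper's own proof: both invoke Lemma~\ref{le-2} to obtain $x$ and $t \in S \setminus \{e\}$ with $(t^{-1}\cdot x)\vert_S = p$, then evaluate equation~(\ref{eq-x}) at $s=t$ to get $p(t) = a$, contradicting the hypothesis. The point you flag for care is indeed fine, since part~(3) of Lemma~\ref{le-2} explicitly produces $t \in S \setminus \{e\}$.
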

\begin{proof}
Suppose that $\tau_p^a$ is not idempotent, so there exists $x \in A^G$ satisfying the conditions of Lemma \ref{le-2}. In particular, there exists $t \in S \setminus \{e\}$ such that $(t^{-1} \cdot x) \vert_S = p$. By equation (\ref{eq-x}) and the definition of $\mu_p^a$, 
\[ p(t) = \mu_p^a( (t^{-1} \cdot x) \vert_S )  = a, \] 
which is a contradiction with the hypothesis. 
\end{proof}

\begin{corollary}\label{const-pattern}
Let $p : S \to A$ be a constant pattern (i.e., $p(s) = p(t)$, for all $s,t \in S$) and let $a \in A \setminus \{p(e)\}$. Then $\tau_p^a : A^G \to A^G$ is idempotent.
\end{corollary}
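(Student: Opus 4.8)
The plan is to derive this statement directly from Lemma \ref{le-main}, which already does the substantive work. The key observation is that the definition of a constant pattern forces $p(s) = p(e)$ for every $s \in S$, so the set of values attained by $p$ is the singleton $\{p(e)\}$. Consequently, the hypothesis of Lemma \ref{le-main}, namely the existence of $a \in A \setminus \{p(e)\}$ with $a \neq p(s)$ for all $s \in S$, collapses to the single requirement $a \neq p(e)$, which is precisely what we are given.

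First I would split on the size of $S$. If $\vert S \vert \geq 2$, then since $a \in A \setminus \{p(e)\}$ and $p(s) = p(e)$ for all $s \in S$, we automatically have $a \neq p(s)$ for every $s \in S$; thus Lemma \ref{le-main} applies verbatim and yields that $\tau_p^a$ is idempotent. No further computation is needed in this regime, as the contradiction machinery of Lemma \ref{le-main} (via the hypothetical $x$ of Lemma \ref{le-2}) has already been established.

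The only point requiring separate attention, and the sole minor obstacle, is the degenerate case $\vert S \vert = 1$, for which Lemma \ref{le-main} is not stated (it assumes $\vert S \vert \geq 2$). Here $S = \{e\}$, one identifies $A^S$ with $A$, and $\mu_p^a$ is simply the transformation of $A$ sending $p(e)$ to $a$ and fixing every other symbol; moreover $\star$ reduces to ordinary composition. A one-line check then shows $\mu_p^a \star \mu_p^a = \mu_p^a$: the symbol $p(e)$ is sent to $a$ and thereafter fixed (since $a \neq p(e)$), while every other symbol is fixed throughout. Hence $\tau_p^a$ is idempotent in this case as well, as was already noted in the example treating $S = \{e\}$, and the argument is complete.
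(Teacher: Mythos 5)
Your proof is correct and follows the paper's intended route: the corollary is an immediate consequence of Lemma \ref{le-main}, since a constant pattern attains only the value $p(e)$ and hence any $a \neq p(e)$ satisfies $a \neq p(s)$ for all $s \in S$. Your separate treatment of the degenerate case $\vert S \vert = 1$ (not covered by the hypothesis $\vert S \vert \geq 2$ of Lemma \ref{le-main}, but already settled in the paper's example on $S = \{e\}$) is a careful touch that the paper leaves implicit.
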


\begin{corollary}
For any finite subset $S \subseteq G$ with $e \in S$, there exists an idempotent cellular automaton $\tau : A^G \to A^G$ whose minimal memory set is equal to $S$. 
\end{corollary}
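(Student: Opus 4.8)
The plan is to split into two cases according to whether $S = \{e\}$ or $\vert S \vert \geq 2$, and in each case to exhibit an explicit idempotent CA whose minimal memory set is exactly $S$. Both cases are short consequences of results already established in this section.

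For $\vert S \vert \geq 2$, I would take $p : S \to A$ to be the constant pattern with $p(s) = 0$ for all $s \in S$, and set $a = 1 \in A \setminus \{ p(e) \}$ (recall that $\{0,1\} \subseteq A$). Corollary \ref{const-pattern} then gives at once that $\tau_p^a$ is idempotent. To conclude that its minimal memory set equals $S$, I would invoke the lemma at the beginning of this section, which asserts precisely that for any pattern $p$ and any $a \in A \setminus \{ p(e) \}$ the minimal memory set of $\tau_p^a$ is $S$ whenever $\vert S \vert \geq 2$. Both hypotheses hold, so $\tau_p^a$ is the desired idempotent with minimal memory set $S$.

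The remaining case is $S = \{ e \}$, which I would handle with the identity map $\id : A^G \to A^G$, trivially an idempotent. Its minimal memory set is $\{ e \}$: since $\vert A \vert \geq 2$, the identity is not a constant transformation, so the empty set is not a memory set for it; and any memory set must contain $e$, for otherwise the local rule would assign the same value at a point $g$ to two configurations differing only at $g$, contradicting that $\id$ distinguishes them. Hence $\{ e \}$ is the minimal memory set, as required.

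I expect no serious obstacle, as the statement is essentially a bookkeeping consequence of the earlier lemmas. The only genuine subtlety is the degenerate case $S = \{ e \}$: one cannot reuse the constant-pattern construction there, because when $\vert A \vert = 2$ the CA $\tau_p^a$ attached to the constant pattern collapses to a constant map, sending every configuration to the all-$a$ configuration, whose minimal memory set is empty rather than $\{ e \}$. This is exactly why the singleton case is treated separately via the identity.
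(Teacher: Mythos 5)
Your proof is correct and follows the same route the paper intends: for $\vert S\vert\geq 2$ combine Corollary \ref{const-pattern} (idempotency of the constant pattern) with the lemma identifying $S$ as the minimal memory set of $\tau_p^a$. The paper states this corollary without proof and does not flag the degenerate case, so your separate treatment of $S=\{e\}$ via the identity map is a welcome extra precaution --- you are right that the constant-pattern construction genuinely fails there when $\vert A\vert=2$, since $\tau_p^a$ then collapses to a constant map with empty minimal memory set.
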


A subset $S \subseteq G$ is \emph{closed under inverses} if $s^{-1} \in S$, for all $s \in S$; in such case, we write $S = S^{-1}$. A pattern $p : S \to A$ is called \emph{symmetrical} if $S = S^{-1}$ and $p(s) = p(s^{-1})$ for all $s \in S$.  

\begin{lemma}\label{le-sym}
Let $p : S \to A$ be a symmetrical pattern and $a \in A \setminus \{p(e)\}$. Then, $\tau_p^a : A^G \to A^G$ is idempotent. 
\end{lemma}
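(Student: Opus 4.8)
The plan is to prove the contrapositive using the characterization in Lemma~\ref{le-2}. That is, I would assume $\tau_p^a$ is not idempotent and derive a contradiction with the symmetry of $p$. By Lemma~\ref{le-2}, there exists a configuration $x \in A^G$ satisfying equation~(\ref{eq-x}), namely $\mu_p^a((s^{-1}\cdot x)\vert_S) = p(s)$ for all $s \in S$, together with the three additional properties: $x\vert_S \neq p$, $x(e) = p(e)$, and the existence of some $t \in S \setminus \{e\}$ with $(t^{-1}\cdot x)\vert_S = p$.

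First I would fix such a witness $x$ and the element $t \in S \setminus \{e\}$ from part~(3), so that $(t^{-1}\cdot x)\vert_S = p$; unwinding the shift action, this reads $x(ts) = p(s)$ for all $s \in S$. The key idea is to exploit symmetry by reading the pattern at the \emph{symmetric} position $t^{-1}$. Since $S = S^{-1}$, the element $t^{-1}$ lies in $S$, so equation~(\ref{eq-x}) applies at $s = t^{-1}$ and gives $\mu_p^a((t\cdot x)\vert_S) = p(t^{-1})$. The symmetry hypothesis $p(t^{-1}) = p(t)$ will let me relate this value back to the pattern read at $t$.

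The main step is to show that $(t\cdot x)\vert_S = p$ as well; equivalently $x(t^{-1}s) = p(s)$ for all $s \in S$. Here I would use the relation $x(ts) = p(s)$ obtained above, combined with symmetry. Evaluating at $s = e$ gives $x(t) = p(e)$, and I expect that substituting $s \mapsto s^{-1}$ or comparing the two shifted windows via $p(s) = p(s^{-1})$ forces the window centered at $t^{-1}$ to also coincide with $p$. Once $(t\cdot x)\vert_S = p$ is established, the definition of $\mu_p^a$ yields $\mu_p^a((t\cdot x)\vert_S) = a$, whereas equation~(\ref{eq-x}) at $s = t^{-1}$ gives $\mu_p^a((t\cdot x)\vert_S) = p(t^{-1}) = p(t)$. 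Since $t \neq e$ but we need $p(t) = a$ to be impossible, I would contrast this with the value $x(e) = p(e)$ and the constraint $x\vert_S \neq p$ to reach the contradiction.

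The hard part will be pinning down exactly how symmetry propagates the pattern from the window at $t$ to the window at $t^{-1}$: the relation $x(ts) = p(s)$ only constrains $x$ on the coset $tS$, and showing that this forces $x$ to equal $p$ on $t^{-1}S$ requires carefully tracking which group elements lie in the overlap $tS \cap t^{-1}S$ and invoking $p(s) = p(s^{-1})$ at each. I anticipate that a clean way around potential gaps is to argue that the value $a = \mu_p^a((t\cdot x)\vert_S)$ appears as $p(t^{-1}) = p(t)$, and since by part~(1) the configuration $x$ is not globally fixed, iterating the symmetric reading produces a position where $\mu_p^a$ must output $a$ at a coordinate where equation~(\ref{eq-x}) demands it output a value of $p$ different from $a$, contradicting $a \notin \{p(s) : \text{the relevant } s\}$ or the injectivity-type constraints forced by $x(e) = p(e)$.
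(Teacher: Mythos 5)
Your setup is the same as the paper's (argue by contradiction via Lemma~\ref{le-2}, fix the witness $x$ and the element $t \in S \setminus \{e\}$ with $(t^{-1}\cdot x)\vert_S = p$, and exploit $t^{-1} \in S$), but the central step you propose --- establishing $(t\cdot x)\vert_S = p$ --- is a genuine gap, and in fact a dead end. The constraints from Lemma~\ref{le-2} only pin down $x$ on (roughly) $S \cup tS$, while the window $t^{-1}S$ can contain group elements outside that set, so no amount of bookkeeping of $tS \cap t^{-1}S$ together with $p(s)=p(s^{-1})$ will force $x$ to equal $p$ on $t^{-1}S$. Worse, the claim is incompatible with the other constraints: evaluating $(t\cdot x)\vert_S = p$ at $s=t$ would give $x(e) = p(t)$, whereas part (2) of Lemma~\ref{le-2} gives $x(e)=p(e)$ and equation~(\ref{eq-x}) at $s=t$ gives $p(t) = \mu_p^a((t^{-1}\cdot x)\vert_S) = \mu_p^a(p) = a \neq p(e)$. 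Your closing paragraph (``iterating the symmetric reading\dots'') does not resolve this; it is not an argument.

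The fix is much shorter than what you are attempting, and it does not involve the window centered at $t^{-1}$ at all. You already have the identity $(t^{-1}\cdot x)\vert_S = p$ as a function on $S$; simply evaluate it at the single coordinate $s = t^{-1}$, which lies in $S$ because $S = S^{-1}$. This gives $(t^{-1}\cdot x)(t^{-1}) = x(tt^{-1}) = x(e) = p(t^{-1})$. By symmetry of $p$, $p(t^{-1}) = p(t)$, and by part (2) of Lemma~\ref{le-2}, $x(e) = p(e)$, so $p(t) = p(e)$. On the other hand, equation~(\ref{eq-x}) at $s = t$ together with $(t^{-1}\cdot x)\vert_S = p$ gives $p(t) = a \neq p(e)$, a contradiction. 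Note that the symmetry of $p$ enters only through the two scalar identities $t^{-1} \in S$ and $p(t^{-1}) = p(t)$, not through any claim about a second window matching $p$.
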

\begin{proof}
Suppose that $\tau_p^a$ is not idempotent, so there exists $x \in A^G$ satisfying the conditions of Lemma \ref{le-2}. Let $t \in S \setminus \{e \}$ be such that $(t^{-1} \cdot x) \vert_S = p$. Evaluating this on $t^{-1} \in S$ we obtain that
\[(t^{-1} \cdot x)(t^{-1}) = x(tt^{-1}) = x(e) = p(t^{-1}).  \]
By symmetry, $p(t^{-1}) = p(t)$ and by part (2.) of Lemma \ref{le-2}, $x(e) = p(e)$. Therefore, 
\[  p(e) = x(e) = p(t^{-1}) = p(t). \]
However, by equation (\ref{eq-x}) and the definition of $\mu_p^a$,
\[ p(t) =   \mu_p^a( (t^{-1} \cdot x) \vert_S ) = a \neq p(e), \] 
which is a contradiction. The result follows. 
\end{proof}

Our aim now is to characterize the idempotency of the particular kind of patterns introduced by the next definition. 

\begin{definition}
A pattern $p : S \to A$ is called \emph{quasi-constant} if it is nonconstant and there exists $r \in S$ such that $p$ restricted to $S \setminus \{r\}$ is constant. In such case, we say that $r \in S$ is the \emph{nonconstant term} of $p$. 
\end{definition}

Theorem \ref{th-quasi-constant} follows by Lemma \ref{le-main}, together with the two following results.

\begin{lemma}
Let $p : S \to A$ be a quasi-constant pattern with nonconstant term $r \in S \setminus \{e\}$ such that $p(r) = a \in A \setminus \{ p(e) \}$. Then, $\tau_p^a$ is idempotent if and only if $r^2 \in S$. 
\end{lemma}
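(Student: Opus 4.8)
The plan is to apply Lemma \ref{le-2}, which reduces the question to deciding whether there exists a configuration $x \in A^G$ satisfying equation (\ref{eq-x}). Write $c := p(e)$, so the quasi-constant hypothesis says $p(s) = c$ for every $s \in S \setminus \{r\}$ and $p(r) = a \neq c$. The single most useful structural fact I would extract first is that the symbol $a$ occurs in $p$ \emph{only} at the coordinate $r$; this pins down exactly where the pattern $p$ is allowed to appear in any witness $x$.

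For the forward implication (if $r^2 \in S$ then $\tau_p^a$ is idempotent) I would argue by contradiction. Assume $\tau_p^a$ is not idempotent and take $x$ as in Lemma \ref{le-2}. Part (3) of that lemma gives some $t \in S \setminus \{e\}$ with $(t^{-1}\cdot x)|_S = p$; plugging $s = t$ into (\ref{eq-x}) yields $a = \mu_p^a(p) = p(t)$, and since $a = p(t)$ forces $t = r$, I conclude $(r^{-1}\cdot x)|_S = p$. Reading off the coordinate $s' = r$ of this equality gives $x(r^2) = p(r) = a$. Now I would use $r^2 \in S$: since $r \neq e$ we have $r^2 \neq r$, so $p(r^2) = c$, and evaluating (\ref{eq-x}) at $s = r^2$ forces $\mu_p^a((r^{-2}\cdot x)|_S) = c$. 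But $(r^{-2}\cdot x)(e) = x(r^2) = a$, so whether or not $(r^{-2}\cdot x)|_S$ equals $p$, the value of $\mu_p^a$ there is $a \neq c$, a contradiction. Hence no witness exists and $\tau_p^a$ is idempotent.

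For the converse (if $r^2 \notin S$ then $\tau_p^a$ is not idempotent) I would exhibit an explicit witness. Define $x \in A^G$ by $x(g) := p(r^{-1}g)$ when $g \in rS$ and $x(g) := c$ otherwise; this is well defined because the two cases partition $G$. The key computations I would then carry out are: (i) $x|_S$ is the constant configuration $c$, which uses precisely $r^2 \notin S$ to rule out the only coordinate of $S$ that could receive the value $a$; (ii) the symbol $a$ appears in $x$ at exactly one cell, namely $r^2$, because $a$ appears in $p$ only at $r$; and (iii) consequently $p$ can be centered in $x$ only at $r$. Granting (iii), I would verify (\ref{eq-x}) coordinatewise: at $s = r$ one has $(r^{-1}\cdot x)|_S = p$, so $\mu_p^a = a = p(r)$; and at each $s \in S \setminus \{r\}$ one has $(s^{-1}\cdot x)|_S \neq p$, whence $\mu_p^a = (s^{-1}\cdot x)(e) = x(s) = c = p(s)$. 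By Lemma \ref{le-2} this shows $\tau_p^a$ is not idempotent.

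I expect the main obstacle to be step (iii) of the construction: ensuring that translating $p$ into $x$ at position $r$ does not \emph{accidentally} create a second occurrence of $p$ centered at some other $s \in S \setminus \{r\}$, since such an occurrence would break the coordinatewise verification of (\ref{eq-x}). The clean way around this is the bookkeeping in (ii): any center $h$ of $p$ must satisfy $x(hr) = p(r) = a$, and since $r^2$ is the unique cell of $x$ carrying the symbol $a$, this forces $hr = r^2$, i.e. $h = r$. A secondary point needing care is the consistency of the definition of $x$ on the overlap $S \cap rS$, which is exactly where the hypothesis $r^2 \notin S$ is used.
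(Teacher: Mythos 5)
Your proposal is correct and follows essentially the same route as the paper: the forward direction pins down $t=r$ from part (3) of Lemma \ref{le-2}, deduces $x(r^2)=a$, and derives a contradiction by evaluating equation (\ref{eq-x}) at $s=r^2\in S$; the converse uses the same witness (the configuration equal to $p(e)$ except for the value $a$ at $r^2$), with your ``unique cell carrying $a$'' bookkeeping replacing the paper's direct coordinate check that $(s^{-1}\cdot x)(r)\neq p(r)$ for $s\neq r$. The only quibble is presentational: the hypothesis $r^2\notin S$ is not needed for the well-definedness of your $x$ (the two cases partition $G$ regardless), but precisely, as you note in step (i), to guarantee $x\vert_S$ is constant.
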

\begin{proof}
Suppose that $r^2 \in S$ and that $\tau_p^a$ is not idempotent. Hence, there exists $x \in A^G$ satisfying the conditions of Lemma \ref{le-2}. Let $t \in S \setminus \{ e\}$ be as in part (3.) of Lemma \ref{le-2}, so $(t^{-1} \cdot x ) \vert_S = p$. By equation (\ref{eq-x}) of  Lemma \ref{le-2}, we have
\[ \mu_p^a ( (t^{-1} \cdot x) \vert_S)  =  p(t) = a = p(r). \]
Since $p$ is quasi-constant with nonconstant term $r$, we must have $t=r$. This implies that $(r^{-1} \cdot x)(s) = p(s)$ for all $s \in S$, so, in particular, $x(r^2) = p(r)$. Evaluating equation (\ref{eq-x}) in $s=r^2$, we get 
\[ \mu_p^a( (r^{-2} \cdot x) \vert_S ) = p(r^2). \]
We have two cases:
\begin{itemize}
\item If $(r^{-2} \cdot x) \vert_S = p$, then, $\mu_p^a( (r^{-2} \cdot x) \vert_S ) = a = p(r)$.
\item If $(r^{-2} \cdot x) \vert_S \neq p$, then $\mu_p^a( (r^{-2} \cdot x) \vert_S ) = (r^{-2} \cdot x)(e) = x(r^2) = p(r)$. 
\end{itemize}
In any case, we obtain that $p(r^2) = p(r)$, which cannot happen as $p$ is quasi-constant with nonconstant term $r$ and $r \neq r^2$ (since $r \neq e$). 

For the converse, suppose that $r^2 \not \in S$. Define $x \in A^G$ such that $x(g) = p(e)$ for all $g \in S^2 \setminus \{ r^2\}$ and $x(r^2) = p(r)$. Observe that for all $s \in S \setminus \{r\}$, we have that $(s^{-1} \cdot x) \vert_S \neq p$, because 
\[ (s^{-1} \cdot x)(r) = x(sr) = p(e) \neq p(r).     \]
Therefore, 
\[ \mu_p^a ( (s^{-1} \cdot x)\vert_S) = (s^{-1} \cdot x)(e) = x(s) = p(e) = p(s), \quad \forall s \in S \setminus \{r, r^2\}. \] 
Since $r^2 \not \in S$, the above follows for all $s \in S \setminus \{r\}$. 
 
Furthermore, $(r^{-1} \cdot x) \vert_S = p$ because $(r^{-1} \cdot x)(s) = x(rs) = p(e) = p(s)$ for all $s \in S \setminus \{r\}$ and $(r^{-1} \cdot x)(r) = x(r^2) = p(r)$. Therefore,
\[  \mu_p^a ( (r^{-1} \cdot x) \vert_S) =  a = p(r).     \]
This shows that equation (\ref{eq-x}) of Lemma \ref{le-2} is satisfied, so $\tau_{p}^a$ is not idempotent.  
\end{proof}

\begin{lemma}
Let $p : S \to A$ be a quasi-constant pattern with nonconstant term $e \in S$ such that $p(s) = a \in A \setminus \{ p(e) \}$ for all $s \in S \setminus \{e\}$. Then, $\tau_p^a$ is idempotent if and only if $S = S^{-1}$.
\end{lemma}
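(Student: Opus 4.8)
The plan is to treat the two implications separately: the ``if'' direction follows almost immediately from Lemma \ref{le-sym}, while the ``only if'' direction I would prove contrapositively by constructing an explicit witness and feeding it into Lemma \ref{le-2}.

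For the forward implication, suppose $S = S^{-1}$. I would first observe that, under the standing hypothesis of the lemma, $p$ is automatically symmetrical: the equality $p(e) = p(e)$ is trivial, and for $s \in S \setminus \{e\}$ we have $s^{-1} \in S \setminus \{e\}$ (since $e$ is its own inverse), so $p(s) = a = p(s^{-1})$. Hence $p$ is symmetrical, and Lemma \ref{le-sym} yields that $\tau_p^a$ is idempotent.

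For the converse I would assume $S \neq S^{-1}$ and build a configuration violating idempotency through Lemma \ref{le-2}. Since $S \neq S^{-1}$, there is some $r \in S$ with $r^{-1} \notin S$, and necessarily $r \neq e$ because $e^{-1} = e \in S$. Define $x \in A^G$ by $x(g) = p(e)$ if $g \in \{e, r\}$ and $x(g) = a$ otherwise; the idea is that $x$ should contain exactly one shifted copy of $p$, centered at $r$, while remaining flat enough elsewhere that no further copy is produced. I would then verify equation (\ref{eq-x}), namely $\mu_p^a((s^{-1} \cdot x)\vert_S) = p(s)$ for all $s \in S$, in three cases. For $s = r$ one checks $(r^{-1} \cdot x)\vert_S = p$ directly: evaluating at $s' \in S$ gives $x(rs')$, which is $p(e)$ when $s' = e$, and equals $a = p(s')$ when $s' \neq e$, where one uses $rs' \notin \{e, r\}$ — this relies precisely on $r^{-1} \notin S$ to rule out $rs' = e$ — so $\mu_p^a((r^{-1} \cdot x)\vert_S) = a = p(r)$. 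For $s = e$, since $x(r) = p(e) \neq a = p(r)$ we get $x\vert_S \neq p$, hence $\mu_p^a(x\vert_S) = x(e) = p(e)$. For $s \in S \setminus \{e, r\}$, I would argue $(s^{-1} \cdot x)\vert_S \neq p$ (otherwise $x(s) = p(e)$, contradicting $x(s) = a$ since $s \notin \{e, r\}$), so $\mu_p^a((s^{-1} \cdot x)\vert_S) = x(s) = a = p(s)$. With (\ref{eq-x}) confirmed, Lemma \ref{le-2} gives that $\tau_p^a$ is not idempotent.

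The delicate point, and the place where $S \neq S^{-1}$ is genuinely used, is the internal consistency of $x$: the prescribed copy of $p$ at $r$ forces $x(rs') = p(s')$ on all of $rS$, and this is compatible with the separately required value $x(e) = p(e)$ only when $e \notin rS$, i.e. when $r^{-1} \notin S$. Were $r^{-1} \in S$ (as happens whenever $S = S^{-1}$), the copy of $p$ at $r$ would demand $x(e) = a$ while the $s = e$ condition demands $x(e) = p(e) \neq a$, collapsing the construction — which is exactly why symmetry of $S$ restores idempotency. Thus the only real verification burden is the case analysis confirming that no unintended occurrence of $p$ arises at shifts $s \neq r$, and I expect that to be the main, though routine, obstacle.
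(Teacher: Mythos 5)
Your proof is correct and follows essentially the same route as the paper: the ``if'' direction via Lemma \ref{le-sym} after noting that $p$ is symmetrical, and the ``only if'' direction by constructing the same witness configuration (your $x$, equal to $p(e)$ on $\{e,r\}$ and $a$ elsewhere, agrees on $S^2$ with the paper's) and verifying equation (\ref{eq-x}) through the same three-case analysis, with the key use of $r^{-1}\notin S$ to ensure $rs'\neq e$. No gaps.
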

\begin{proof}
If $S = S^{-1}$, then $p$ is symmetric, so the result follows by Lemma \ref{le-sym}. Suppose that $S$ is not closed under inverses, so there exists $k \in S$ such that $k^{-1} \not \in S$. Clearly, $k \neq e$. Define $x \in A^G$ such that $x(e) = x(k) = p(e)$ and $x(g) = p(k)$ for all $g \in S^2 \setminus \{ e,k \}$. Observe that for all $s \in S \setminus \{e, k \}$ we have $(s^{-1} \cdot x) \vert_S \neq p$ because $(s^{-1} \cdot x)(e) = x(s)  = p(k) \neq p(e)$. Hence, 
\[ \mu_p^a( (s^{-1} \cdot x) \vert_S ) =  x(s) = p(k) = p(s), \quad  \forall s \in S \setminus \{ e,k \}.  \]
Now, $x \vert_S \neq p$ because $x(k) = x(e) = p(e) \neq p(k)$. Hence,
\[ \mu_p^a(x \vert_S) = x(e) = p(e). \]
Finally, we shall show that $(k^{-1} \cdot x) \vert_S = p$. For all $s \in S \setminus \{ e\}$, then $ks \neq k$ and $ks \neq e$ (as $k^{-1} \not \in S$). Thus, for all $s \in S \setminus \{ e\}$ we have $(k^{-1} \cdot x)(s) = x(ks) = p(k) = p(s)$. Moreover, $(k^{-1} \cdot x) (e) = x(k) = p(e)$. Hence, using the hypothesis that $a = p(s)$ for all $ s \in S \setminus \{e\}$, we deduce
\[ \mu_p^a( (k^{-1} \cdot x) \vert_S) = a = p(k).    \] 
This shows that equation (\ref{eq-x}) of Lemma \ref{le-2} is satisfied, so $\tau_{p}^a$ is not idempotent.  
\end{proof}


\section{The natural order on idempotent CA} 

Recall that the natural order defined on two idempotents $\tau, \sigma \in \CA(G;A)$ is given by
\[ \tau \leq \sigma \quad \Leftrightarrow \quad  \tau \sigma = \sigma \tau = \tau, \]
where $\tau \sigma = \tau \circ \sigma$ is the composition. It is clear that the identity $\id : A^G \to A^G$ is the maximal idempotent in $\CA(G;A)$, while the minimal idempotents are the constant cellular automata $\sigma_a : A^G \to A^G$, with $a \in A$, defined by $\sigma_a(x) := a^G \in A^G$, where $a^G(g) = a$, for all $g \in G$. 

The following result is a generalization of \cite[Lemma 5]{CRMC}.

\begin{lemma}
Let $\tau : A^G \to A^G$ be an idempotent cellular automaton. For any $a \in A$, $\sigma_a \leq \tau$ if and only if $\tau( a^G) = a^G$. 
\end{lemma}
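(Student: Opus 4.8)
The plan is to unwind the definition of the natural order directly, exploiting the fact that $\sigma_a$ is a \emph{constant} cellular automaton. By definition, $\sigma_a \leq \tau$ means precisely that $\sigma_a \tau = \tau \sigma_a = \sigma_a$, so I would verify the two composition identities separately and identify which one carries the content.

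First I would observe that one of the two identities holds automatically, with no hypothesis on $\tau$. Since $\sigma_a$ sends every configuration to the fixed configuration $a^G$, for any $x \in A^G$ we have
\[ (\sigma_a \tau)(x) = \sigma_a(\tau(x)) = a^G = \sigma_a(x), \]
and hence $\sigma_a \tau = \sigma_a$. This reduces the equivalence to analysing the single remaining identity $\tau \sigma_a = \sigma_a$.

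Next I would compute $\tau \sigma_a$ explicitly. For any $x \in A^G$,
\[ (\tau \sigma_a)(x) = \tau(\sigma_a(x)) = \tau(a^G), \]
so $\tau \sigma_a$ is itself the constant cellular automaton with value $\tau(a^G)$. Comparing with $\sigma_a$, the constant automaton with value $a^G$, the identity $\tau \sigma_a = \sigma_a$ holds if and only if $\tau(a^G) = a^G$. Combining this with the automatic identity from the previous step yields the claimed equivalence.

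The argument presents no genuine obstacle: the only insight is to notice that the composition $\sigma_a \tau$ is forced by the constancy of $\sigma_a$, so that the whole content of the natural order condition is carried by $\tau \sigma_a = \sigma_a$, i.e. by whether $a^G$ is a fixed point of $\tau$. I would remark that the idempotence of $\tau$ is never actually invoked; it serves only to guarantee that the natural order is defined on the pair $(\sigma_a, \tau)$, the automaton $\sigma_a$ being visibly idempotent.
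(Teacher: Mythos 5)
Your proposal is correct and follows essentially the same route as the paper: both note that $\sigma_a\tau=\sigma_a$ holds automatically because $\sigma_a$ is constant, and then reduce $\tau\sigma_a=\sigma_a$ to the condition $\tau(a^G)=a^G$. Your closing remark that the idempotence of $\tau$ is used only to make the natural order applicable is accurate and consistent with the paper's treatment.
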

\begin{proof}
Since $\sigma_a$ is constant, then $\sigma_a \tau = \sigma_a$. Now, for any $x \in A^G$, $\tau \sigma_a(x) = \tau( a^G)$. Hence, $\tau(a^G) = a^G$ if and only if $\tau \sigma_a = \sigma_a$, which holds if and only if $\sigma_a \leq \tau$. 
\end{proof}

\begin{corollary}
Let $\tau_p^a: A^G \to A^G$ be an idempotent cellular automaton defined by a pattern $p : S \to A$ and $a \in A \setminus \{p(e) \}$. For any $b \in A$, $\sigma_b \leq \tau_p^a$ if and only if $p \neq b^S$, where $b^S : S \to A$ is the pattern defined by $b^S(s) = b$, for all $s \in S$. 
\end{corollary}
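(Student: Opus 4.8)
The plan is to deduce the statement directly from the previous lemma, applied to the idempotent $\tau := \tau_p^a$ and the symbol $b \in A$. That lemma reduces the claim $\sigma_b \leq \tau_p^a$ to the single equation $\tau_p^a(b^G) = b^G$, so it suffices to show that this equation holds precisely when $p \neq b^S$.

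First I would compute $\tau_p^a(b^G)$ explicitly. The key observation is that the constant configuration $b^G$ is fixed by the shift action: $g^{-1} \cdot b^G = b^G$ for all $g \in G$, and therefore $(g^{-1} \cdot b^G)\vert_S = b^S$, the constant pattern with value $b$ on $S$. Consequently, by the definition of the local rule,
\[ \tau_p^a(b^G)(g) = \mu_p^a\big( (g^{-1} \cdot b^G)\vert_S \big) = \mu_p^a(b^S), \quad \forall g \in G, \]
so $\tau_p^a(b^G)$ is itself a constant configuration, equal to $c^G$ where $c := \mu_p^a(b^S)$.

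It then remains to evaluate $\mu_p^a(b^S)$ via the two cases in Definition \ref{CA-pattern}. If $b^S = p$, then $c = a$ and $\tau_p^a(b^G) = a^G$; here I would note the one point requiring a small check, namely that $b^S = p$ forces $p(e) = b$, whence $a \neq p(e) = b$ and so $a^G \neq b^G$. Thus the equation fails and $\sigma_b \not\leq \tau_p^a$. If instead $b^S \neq p$, then $\mu_p^a(b^S) = b^S(e) = b$, so $\tau_p^a(b^G) = b^G$ and $\sigma_b \leq \tau_p^a$ by the previous lemma. Combining the two cases yields exactly $\sigma_b \leq \tau_p^a \Leftrightarrow p \neq b^S$.

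There is no substantial obstacle here: the entire argument is a routine unwinding of the definitions once the previous lemma is in hand. The only place to be slightly careful is the case $p = b^S$, where one must invoke the standing hypothesis $a \neq p(e)$ to guarantee $a^G \neq b^G$; without this the ``only if'' direction would fail, since $\tau_p^a(b^G) = a^G$ would coincide with $b^G$.
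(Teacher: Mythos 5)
Your proposal is correct and follows the same route as the paper: both reduce the claim via the preceding lemma to checking whether $\tau_p^a(b^G) = b^G$, which the paper states as an observation and you simply spell out in full (including the needed remark that $p = b^S$ forces $b = p(e) \neq a$). No gaps.
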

\begin{proof}
Observe that $\tau_p^a(b^G) = b^G$ if and only if $p \neq b^S$, so the result follows by the previous lemma. 
\end{proof}

For $\tau \in \CA(G;A)$, define the \emph{kernel} of $\tau$ as the following equivalence relation: 
\[ \ker(\tau) := \{ (x,y) \in A^G \times A^G : \tau(x) = \tau(y) \}.  \]
The following result is well-known for transformation semigroups (see \cite[Prop.2.3]{KM86}), but we shall add its proof for completeness.

\begin{lemma}\label{le-im-ker}
Let $\tau, \sigma \in \CA(G;A)$ be idempotents. 
\begin{enumerate}
\item If $\tau = \sigma \tau$, then $\im(\tau) \subseteq \im(\sigma)$.
\item $\tau = \tau\sigma$ if and only if $\ker(\sigma) \subseteq \ker(\tau)$.
\end{enumerate}
\end{lemma}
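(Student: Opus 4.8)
The plan is to verify both parts by chasing configurations through the compositions, treating $\tau$ and $\sigma$ as functions $A^G \to A^G$ with $\tau\sigma = \tau \circ \sigma$, and to pin down exactly where idempotence is genuinely used. For part (1), I would note that the conclusion is essentially set-theoretic: the image of a composition is always contained in the image of the outer map. Concretely, assuming $\tau = \sigma\tau$, any $y \in \im(\tau)$ can be written $y = \tau(x)$, and then $y = \tau(x) = \sigma(\tau(x)) = \sigma(y) \in \im(\sigma)$; hence $\im(\tau) \subseteq \im(\sigma)$. (Equivalently, $\im(\tau) = \im(\sigma\tau) = \sigma(\tau(A^G)) \subseteq \sigma(A^G) = \im(\sigma)$.) Idempotence is not needed for this direction.

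For the forward implication of part (2), I would assume $\tau = \tau\sigma$ and take an arbitrary pair $(x,y) \in \ker(\sigma)$, so $\sigma(x) = \sigma(y)$. Applying $\tau$ and invoking the hypothesis gives $\tau(x) = \tau(\sigma(x)) = \tau(\sigma(y)) = \tau(y)$, so $(x,y) \in \ker(\tau)$, proving $\ker(\sigma) \subseteq \ker(\tau)$. For the converse, I would assume $\ker(\sigma) \subseteq \ker(\tau)$ and use that $\sigma$ is idempotent: for any $x \in A^G$ we have $\sigma(\sigma(x)) = \sigma(x)$, so the pair $(x, \sigma(x))$ lies in $\ker(\sigma)$, hence in $\ker(\tau)$. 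This says $\tau(x) = \tau(\sigma(x)) = (\tau\sigma)(x)$ for every $x$, i.e. $\tau = \tau\sigma$.

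Since each step is a direct substitution, I do not expect any real obstacle; the only point that requires care is locating where idempotence actually enters, namely the observation that $(x,\sigma(x)) \in \ker(\sigma)$ in the backward direction of part (2), which relies on $\sigma\sigma = \sigma$. It is worth remarking that idempotence of $\tau$ is never invoked in this lemma, so I would not appeal to it, and that only the idempotence of $\sigma$ is used, and only in that one place.
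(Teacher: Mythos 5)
Your proof is correct and follows essentially the same route as the paper's: element-chasing for part (1), applying $\tau$ to $\sigma(x)=\sigma(y)$ for the forward direction of (2), and using $(x,\sigma(x))\in\ker(\sigma)$ via idempotence of $\sigma$ for the converse. Your added observation that idempotence of $\tau$ is never needed, and that of $\sigma$ only in the backward direction of (2), is accurate.
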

\begin{proof}
Let $\tau(x) \in \im(\tau)$. Since $\tau = \sigma \tau$, then $\tau(x) = \sigma \tau(x) \in \im(\sigma)$, so $\im(\tau) \subseteq \im(\sigma)$. Part (1.) follows. 

For part (2.), suppose that $\tau = \tau\sigma$ and let $(x,y) \in \ker(\sigma)$, so $\sigma(x) = \sigma(y)$. Applying $\tau$ on both sides we obtain
\[\tau \sigma(x) = \tau\sigma(y) \quad \Rightarrow \quad \tau(x) = \tau(y).  \]
Therefore, $(x,y) \in \ker(\tau)$, so $\ker(\sigma) \subseteq \ker(\tau) $.

Conversely, suppose that $\ker(\sigma) \subseteq \ker(\tau)$. Since $\sigma$ is idempotent, we have that for any $x \in A^G$, $(\sigma(x), x) \in \ker(\sigma)$. By hypothesis, $(\sigma(x), x) \in \ker(\tau)$, so $\tau\sigma(x) = \tau(x)$ for all $x \in A^G$. The result follows. 
\end{proof}

The following is a characterization of the natural partial order on idempotent CA defined by patterns in terms of images and kernels.  

\begin{theorem}\label{order-char}
Let  $p : S_1 \to A$ and $q : S_2 \to A$ be patterns such that $\tau_p^a$ and $\tau_q^b$ are idempotents, for some $a \in A \setminus \{p(e)\}$ and $b \in A \setminus \{q(e)\}$. Then, $\tau_p^a \leq \tau_q^b$ if and only if $X_p \subseteq X_q$ and $\ker(\tau_q^b) \subseteq \ker(\tau_p^a)$. 
\end{theorem}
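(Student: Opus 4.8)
The plan is to translate the order relation $\tau_p^a \le \tau_q^b$, which unpacks into the two equations $\tau_p^a \tau_q^b = \tau_p^a$ and $\tau_q^b \tau_p^a = \tau_p^a$, into assertions about images and kernels, and then to exploit that for an idempotent CA the image coincides with the set of fixed points. Writing $\tau := \tau_p^a$ and $\sigma := \tau_q^b$, the preliminary observation I would record is that any idempotent $\sigma \in \CA(G;A)$ satisfies $\im(\sigma) = \Fix(\sigma)$: if $y = \sigma(z) \in \im(\sigma)$ then $\sigma(y) = \sigma(\sigma(z)) = \sigma(z) = y$, and conversely every fixed point lies in the image. Combining this with Lemma \ref{le-idem}, which gives $X_p = \Fix(\tau) = \im(\tau)$ and $X_q = \im(\sigma)$ (using that $\tau$ and $\sigma$ are idempotent), the hypothesis $X_p \subseteq X_q$ becomes literally the inclusion $\im(\tau) \subseteq \im(\sigma)$.

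The forward implication is then a direct appeal to Lemma \ref{le-im-ker}. Assuming $\tau \le \sigma$, the equation $\tau = \sigma\tau$ yields $\im(\tau) \subseteq \im(\sigma)$ by part (1), i.e. $X_p \subseteq X_q$; and the equation $\tau = \tau\sigma$ yields $\ker(\sigma) \subseteq \ker(\tau)$ by part (2), i.e. $\ker(\tau_q^b) \subseteq \ker(\tau_p^a)$.

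For the converse I would assume $X_p \subseteq X_q$ and $\ker(\sigma) \subseteq \ker(\tau)$ and recover both equations. The kernel inclusion immediately gives $\tau = \tau\sigma$ by Lemma \ref{le-im-ker}(2). To obtain $\sigma\tau = \tau$, I would argue pointwise: since $\im(\tau) \subseteq \im(\sigma) = \Fix(\sigma)$, for every $x \in A^G$ the configuration $\tau(x)$ is fixed by $\sigma$, so $\sigma\tau(x) = \tau(x)$. Hence $\sigma\tau = \tau$, and together with $\tau\sigma = \tau$ this is precisely $\tau \le \sigma$.

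The one point requiring care, and what I expect to be the main obstacle, is that Lemma \ref{le-im-ker}(1) is only a one-way implication: it produces $\im(\tau) \subseteq \im(\sigma)$ from $\sigma\tau = \tau$ but does not a priori supply the reverse passage needed in the converse. The remedy is exactly the identity $\im(\sigma) = \Fix(\sigma)$, which upgrades the set-theoretic inclusion $\im(\tau) \subseteq \im(\sigma)$ to the stronger pointwise statement that $\sigma$ fixes every value taken by $\tau$. With this observation in place, both implications are short.
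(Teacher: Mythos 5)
Your proposal is correct and follows essentially the same route as the paper: the forward direction is Lemma \ref{le-im-ker}, the kernel inclusion gives $\tau_p^a = \tau_p^a\tau_q^b$ via Lemma \ref{le-im-ker}(2), and $\tau_q^b\tau_p^a = \tau_p^a$ follows because $X_p \subseteq X_q$ forces $\tau_q^b$ to fix every configuration in $\im(\tau_p^a)$. The only cosmetic difference is that you phrase this last step through the identity $\im(\tau_q^b)=\Fix(\tau_q^b)$ for idempotents, whereas the paper invokes $X_q = \Fix(\tau_q^b)$ directly from Lemma \ref{le-idem}; these are interchangeable.
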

\begin{proof}
Recall that, by Lemma \ref{le-idem}, $\im(\tau_p^a) = X_p$ and $\im(\tau_q^b) = X_q$. Lemma \ref{le-im-ker} implies the direct implication. For the converse, suppose that $X_p \subseteq X_q$ and $\ker(\tau_q^b) \subseteq \ker(\tau_p^a)$. By Lemma \ref{le-im-ker} (2.), $\tau_p^a = \tau_p^a \tau_q^b$. Now, $\im(\tau_p^a)=X_p \subseteq X_q$ implies that the pattern $q$ never appears as a subpattern in $\im(\tau_p^a)$, so $\tau_q^b$ acts as the identity over $\im(\tau_p^a)$. Therefore, $\tau_q^b \tau_p^a = \tau_p^a$, and $\tau_p^a \leq \tau_q^b$. 
\end{proof}

\begin{corollary}
Let  $p : S_1 \to A$ and $q : S_2 \to A$ be patterns such that $\tau_p^a$ and $\tau_q^b$ are idempotents, for some $a \in A \setminus \{p(e)\}$ and $b \in A \setminus \{q(e)\}$. If $\tau_p^a < \tau_q^b$, then $X_p \subset X_q$. 
\end{corollary}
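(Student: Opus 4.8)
The plan is to deduce the inclusion $X_p \subseteq X_q$ for free from the non-strict part of the hypothesis, and then spend the real work on upgrading it to a \emph{proper} inclusion. Writing $\tau_p^a < \tau_q^b$ means $\tau_p^a \leq \tau_q^b$ together with $\tau_p^a \neq \tau_q^b$. Applying Theorem \ref{order-char} to $\tau_p^a \leq \tau_q^b$ immediately gives $X_p \subseteq X_q$ (and, incidentally, $\ker(\tau_q^b) \subseteq \ker(\tau_p^a)$, though I do not expect to need the kernel condition). So the only thing left to rule out is the possibility that $X_p = X_q$.

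First I would argue by contradiction: assume $X_p = X_q$ and show that this forces $\tau_p^a = \tau_q^b$, contradicting strictness. The key is to translate the statement about subshifts into a statement about images via Lemma \ref{le-idem}, which tells us that $\im(\tau_p^a) = X_p$ and $\im(\tau_q^b) = X_q$; hence the assumption becomes $\im(\tau_p^a) = \im(\tau_q^b)$. From $\tau_p^a \leq \tau_q^b$ I will use the defining relation $\tau_p^a \tau_q^b = \tau_p^a$. Now fix an arbitrary $x \in A^G$. On one hand $\tau_q^b(x) \in \im(\tau_q^b) = \im(\tau_p^a)$, and since $\tau_p^a$ is idempotent it fixes every point of its own image, so $\tau_p^a(\tau_q^b(x)) = \tau_q^b(x)$. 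On the other hand $\tau_p^a \tau_q^b = \tau_p^a$ gives $\tau_p^a(\tau_q^b(x)) = \tau_p^a(x)$. Comparing the two yields $\tau_q^b(x) = \tau_p^a(x)$ for every $x$, i.e. $\tau_p^a = \tau_q^b$, the desired contradiction. Therefore $X_p \subsetneq X_q$.

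I do not anticipate a genuine obstacle here: the argument is essentially the elementary semigroup fact that two comparable idempotents with the same image must coincide, specialized to $\CA(G;A)$. The only point that requires care is invoking that an idempotent transformation fixes its image pointwise (for $y = \tau_p^a(z)$, idempotency gives $\tau_p^a(y) = \tau_p^a(\tau_p^a(z)) = \tau_p^a(z) = y$), which is exactly what converts ``equal images'' into ``equal maps.'' No computation with the local rules $\mu_p^a$, $\mu_q^b$ or with explicit configurations is needed, since everything is phrased at the level of images and the composition relation supplied by the order.
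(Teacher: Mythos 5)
Your proposal is correct and follows essentially the same route as the paper: both argue by contradiction that $X_p = X_q$ together with $\tau_p^a \tau_q^b = \tau_p^a$ forces $\tau_p^a = \tau_q^b$. The only (cosmetic) difference is in how you justify $\tau_p^a \tau_q^b = \tau_q^b$ under the assumption $X_p = X_q$: the paper reuses the pattern-avoidance argument from the proof of Theorem \ref{order-char} (applied with the roles of $p$ and $q$ swapped), whereas you invoke the general fact that an idempotent transformation fixes its image pointwise, which via Lemma \ref{le-idem} amounts to the same thing but would work for arbitrary comparable idempotents in any transformation semigroup.
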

\begin{proof}
If $\tau_p^a < \tau_q^b$, then $\tau_q^b \tau_p^a = \tau_p^a \tau_q^b = \tau_p^a$. Now, if $X_p = X_q$, then $X_q \subseteq X_p$ and the proof of the previous theorem implies that $\tau_p^a \tau_q^b = \tau_q^b$. Therefore, $\tau_p^a = \tau_q^b$, which contradicts the hypothesis. 
\end{proof}

We define an order on patterns. For $p : S_1 \to A$ and $q : S_2 \to A$, say that $p \leq q$ if and only if $S_1 \subseteq S_2$ and $p = q \vert_{S_1}$. Observe that if $p \leq q$, then $X_p \subseteq X_q$.

\begin{lemma}\label{le-comp}
Let  $p : S_1 \to A$ and $q : S_2 \to A$ be patterns such that $\tau_p^a$ and $\tau_q^b$ are idempotents, for some $a \in A \setminus \{p(e)\}$ and $b \in A \setminus \{q(e)\}$. Suppose that $q(e) \neq a$ and that $\tau_p^a \leq \tau_q^b$. Then, $a=b$ and $p \leq q$.
\end{lemma}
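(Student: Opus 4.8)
The plan is to work directly with the defining relation of the natural order, namely $\tau_p^a\tau_q^b=\tau_q^b\tau_p^a=\tau_p^a$ (equivalently, the half $\ker(\tau_q^b)\subseteq\ker(\tau_p^a)$, i.e. $\tau_p^a=\tau_p^a\tau_q^b$, supplied by Lemma \ref{le-im-ker} / Theorem \ref{order-char}), and to evaluate it at the single cell $e$ on a configuration that exhibits $q$ at the origin. Concretely, I would pick \emph{any} $x\in A^G$ with $x\vert_{S_2}=q$. Then $q$ is read centered at $e$, so $\tau_q^b(x)(e)=\mu_q^b(x\vert_{S_2})=b$, while $x(e)=q(e)$ since $e\in S_2$. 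Writing $v:=\tau_p^a(x)(e)$, the relation $\tau_p^a=\tau_p^a\tau_q^b$ gives $v=\tau_p^a\big(\tau_q^b(x)\big)(e)$, and the whole proof is about squeezing $v$ between its two descriptions.

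The key step is the following pinch. From the local rule $\mu_p^a$ I read off, on the one hand,
\[
\tau_p^a(x)(e)=\mu_p^a(x\vert_{S_1})\in\{a,\ x(e)\}=\{a,\ q(e)\},
\]
and, on the other hand,
\[
\tau_p^a\big(\tau_q^b(x)\big)(e)=\mu_p^a\big((\tau_q^b x)\vert_{S_1}\big)\in\{a,\ (\tau_q^b x)(e)\}=\{a,\ b\}.
\]
Since $q(e)\neq a$ (hypothesis) and $q(e)\neq b$ (because $b\in A\setminus\{q(e)\}$), we have $q(e)\notin\{a,b\}$, so the two membership statements force $v=a$. Reading this back, $v=a\neq q(e)$ rules out the second branch of the first expression, whence $x\vert_{S_1}=p$; in particular $p(e)=x(e)=q(e)$. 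Then $(\tau_q^b x)(e)=b\neq p(e)$, so $(\tau_q^b x)\vert_{S_1}\neq p$, which forces the second expression into its $b$-branch, giving $a=b$. I want to stress that this comparison lives entirely at the cell $e$: it never requires knowing where else $q$ occurs in $x$, nor how $\tau_q^b x$ behaves away from $e$, because $(\tau_q^b x)\vert_{S_1}$ already differs from $p$ at $e$. The only place the hypothesis $q(e)\neq a$ is used is exactly here, to forbid the degenerate branch in which $\tau_p^a$ could overwrite the value $b$ back to $q(e)$ and thereby hide the discrepancy.

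Finally I would upgrade the conclusion $x\vert_{S_1}=p$, which now holds for \emph{every} $x$ with $x\vert_{S_2}=q$, into $p\leq q$. If some $s^\ast\in S_1\setminus S_2$ existed, I could (using $\vert A\vert\geq 2$) choose two such configurations agreeing on $S_2$ but differing at $s^\ast$; both would have to satisfy $(\cdot)\vert_{S_1}=p$, hence both take the value $p(s^\ast)$ at $s^\ast$, a contradiction. Thus $S_1\subseteq S_2$, and then $x\vert_{S_1}=p$ together with $x\vert_{S_2}=q$ yields $q(s)=x(s)=p(s)$ for all $s\in S_1$, i.e. $p=q\vert_{S_1}$ and $p\leq q$. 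I do not expect a serious obstacle here; the one point demanding care is the realization that reading a single cell avoids any need to control spurious occurrences of $p$ or $q$ (which would be the genuinely delicate issue if one instead tried to argue through the subshift inclusion $X_p\subseteq X_q$), so the argument goes through uniformly for an arbitrary group $G$, including groups with torsion.
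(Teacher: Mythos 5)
Your proposal is correct and follows essentially the same route as the paper: both evaluate the relation $\tau_p^a=\tau_p^a\tau_q^b$ at the cell $e$ on a configuration $x$ with $x\vert_{S_2}=q$, compare the branches of the local rule $\mu_p^a$ applied to $x$ and to $\tau_q^b(x)$ (your ``pinch'' is just a compressed form of the paper's three-case analysis, with the hypotheses $q(e)\neq a$ and $q(e)\neq b$ eliminating the same two cases), and then deduce $S_1\subseteq S_2$ by perturbing a configuration off $S_2$. The only cosmetic difference is that you invoke $\tau_p^a\tau_q^b=\tau_p^a$ directly from the definition of the order, whereas the paper routes this through the kernel inclusion of Lemma \ref{le-im-ker}.
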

\begin{proof}
Let $x \in A^G$ be such that $x \vert_{S_2} = q$ and let $y:= \tau_q^b(x)$. Then,
\[ y = \tau_q^b (x) = \tau_q^b(\tau_q^b(x)) = \tau_q^b(y). \]
Evaluating on $e$, we obtain
\begin{equation}\label{eq-muq}
y(e) = \mu_q^b( x \vert_{S_2}) = b \neq q(e) =x(e). 
\end{equation}
Since $\ker(\tau_q^b) \subseteq \ker(\tau_p^a)$, we have that $\tau_p^a(x) = \tau_p^a(y)$, so
\[ \mu_p^a( x \vert_{S_1}) = \mu_p^a( y \vert_{S_1}). \]
Since $y(e) \neq x(e)$, then $x \vert_{S_1} \neq y \vert_{S_1}$. We have three cases:
\begin{enumerate}
\item If $x \vert_{S_1} \neq p$ and $y \vert_{S_1} \neq p$, then
\[ x(e)= \mu_p^a( x \vert_{S_1}) = \mu_p^a( y \vert_{S_1}) = y(e), \]
which contradicts (\ref{eq-muq}).

\item If  $x \vert_{S_1} \neq p$ and $y \vert_{S_1} = p$, then
\[ x(e) = \mu_p^a( x \vert_{S_1}) = \mu_p^a( y \vert_{S_1}) = a. \]
Hence, as $x \vert_{S_2} = q$, then $q(e) = x(e)= a$, which contradicts the hypothesis.  

\item If $x \vert_{S_1} = p$ and $y \vert_{S_1} \neq p$, then 
\[ a = \mu_p^a( x \vert_{S_1})  = \mu_p^a( y \vert_{S_1}) = y(e) = b. \]
\end{enumerate}
As (3.) is the only case without contradiction, this shows that $a=b$ and $x \vert_{S_1} = p$. Hence, for every $x \in A^G$, if $x \vert_{S_2} = q$, then $x \vert_{S_1} = p$. Suppose there exists $s \in S_1 \setminus S_2$, and take $x \in A^G$ such that $x \vert_{S_2} = q$ and $x(s) \neq p(s)$. Then $x \vert_{S_1} \neq p$, which contradicts the previous property. Therefore, $S_1 \subseteq S_2$ and $q \vert_{S_1} = (x \vert_{S_2}) \vert_{S_1}= p$, so $p \leq q$. 
\end{proof}

\begin{corollary}
Let $G$ be an infinite group. Then $\CA(G;A)$ has an infinite set of independent idempotents.
\end{corollary}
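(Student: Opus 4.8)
The plan is to exhibit an explicit infinite family of idempotent CA defined by constant patterns whose only distinguishing feature is their domain, and then use the order characterization of Lemma~\ref{le-comp} to turn comparability of the CA into inclusion of their domains. Since defeating inclusion among finite subsets of an infinite group is trivial, independence will follow immediately.

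Concretely, fix the symbols $0, 1 \in A$. Because $G$ is infinite, I can choose an infinite sequence of pairwise distinct elements $g_1, g_2, \dots \in G \setminus \{e\}$, and set $S_i := \{e, g_i\}$ for each $i \geq 1$. Let $p_i := 0^{S_i} : S_i \to A$ be the constant pattern equal to $0$, and put $\tau_i := \tau_{p_i}^{1}$. Each $S_i$ has size $2$ and contains $e$, and $1 \neq 0 = p_i(e)$, so by Corollary~\ref{const-pattern} (equivalently, Lemma~\ref{le-main}) every $\tau_i$ is an idempotent of $\CA(G;A)$.

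It then remains to show that the $\tau_i$ are pairwise incomparable in the natural order. Suppose, for some $i \neq j$, that $\tau_i \leq \tau_j$. The hypothesis of Lemma~\ref{le-comp} is met, since $p_j(e) = 0 \neq 1 = a$, so the lemma yields $p_i \leq p_j$ and in particular $S_i \subseteq S_j$. But $\vert S_i \vert = \vert S_j \vert = 2$ while $S_i \neq S_j$ (because $g_i \neq g_j$), so $S_i \subseteq S_j$ is impossible. The symmetric assumption $\tau_j \leq \tau_i$ fails for the same reason. Hence $\{\tau_i : i \geq 1\}$ is an infinite set of pairwise incomparable idempotents.

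The argument has essentially no obstacle once Lemma~\ref{le-comp} is in hand: the only point requiring care is verifying its hypothesis $q(e) \neq a$ in each comparison, which holds uniformly here because every pattern takes value $0$ at $e$ while the written symbol is $1$. Had I used patterns with differing values at $e$, this hypothesis could fail and the reduction from CA-comparability to domain-inclusion would break; keeping a single constant value across the whole family is precisely what transfers the antichain of two-element domains cleanly to an antichain of idempotents.
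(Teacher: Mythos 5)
Your proof is correct. It rests on the same key lemma as the paper's argument (Lemma~\ref{le-comp}, which converts comparability of the CA into comparability of the defining patterns), but the witness family is different and somewhat simpler: you take constant patterns $0^{S_i}$ on the two-element sets $S_i=\{e,g_i\}$, so idempotency comes from Corollary~\ref{const-pattern} and incomparability of the patterns comes from incomparability of their domains (two distinct two-element sets containing $e$ cannot be nested). The paper instead builds symmetrical patterns on the nested sets $S_i=\{e,g_j,g_j^{-1}: j\le i\}$, taking the value $1$ exactly at $g_i^{\pm 1}$ and $0$ elsewhere; idempotency then comes from Lemma~\ref{le-sym}, and incomparability holds even though the domains are nested, because $p_k\vert_{S_i}\neq p_i$. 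Both arguments verify the hypothesis $q(e)\neq a$ of Lemma~\ref{le-comp} in the same way (all patterns take the value $0$ at $e$ while the written symbol is $1$). Your version buys a slightly shorter construction; the paper's version illustrates that incomparability can be forced by the pattern values alone, not just by the domains.
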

\begin{proof}
Let $(g_i)_{i \in \mathbb{N}}$ be an infinite sequence of different nontrivial elements of $G$ such that $g_i \not\in \{ g_j, g_j^{-1} : j < i \}$. For each, $i \in \mathbb{N}$, define $S_i := \{ e, g_j, g_j^{-1} : j \leq i \}$ and let $p_i : S_i \to A$ be the pattern defined by 
\[ p_i(e) = p_i(g_j) = p_i(g^{-1}_j) = 0, \ \forall j < i, \quad \text{ and } \quad p_i(g_i) = p_i(g^{-1}_i) = 1.  \]
Since $p_i$ is symmetrical, $\tau_{p_i}^1$ is idempotent for all $i \in \mathbb{N}$, by Lemma \ref{le-sym}. For all $i, k \in \mathbb{N}$, we have $p_i(e) = p_k(e) = 0 \neq 1$, and $p_i$ and $p_k$ are not comparable in the order of patterns. By Lemma \ref{le-comp}, $\tau_{p_i}^1$ and $\tau_{p_k}^1$ are not comparable for all $i, k \in \mathbb{N}$.
\end{proof}

We denote by $\vert s \vert$ the \emph{order} of $s \in G$, which is the least positive integer $k$ such that $s^k =e$, or $\infty$ in case no such $k$ exists. The converse of Lemma \ref{le-comp} is not true, as it is shown by the following example.

\begin{example}
Suppose that $G$ has an element $s \in G$ such that $\vert s \vert > 3$. Let 
\[ S_1 :=\{e,s\} \text{ and } S_2 :=  \{ e, s, s^{-1} \} \]
and consider the constant patterns $p :  S_1 \to A$ and $q : S_2 \to A$ defined by $p = 00$ and $q=000$. We claim that $\ker(\tau_q^1)$ is not contained in $\ker(\tau_p^1)$. Consider the configuration 
\[ x(g) := \begin{cases}
0 & \text{ if } g \in S_2 \\
1 & \text{ otherwise.} 
 \end{cases} \]
Let $y := \tau_q^1(x)$. Clearly, $(x,y) \in \ker(\tau_q^1)$. Observe that $(s \cdot x) \vert_{S_2} \neq q$, because $(s \cdot x)(s^{-1}) = x(s^{-2}) = 1$,  since $\vert s \vert > 3$, so $s^{-2} \not \in S_2$. Hence
\begin{align*}
y(s^{-1}) & = \tau_q^1(x)(s^{-1}) =  \mu_q^1((s \cdot x) \vert_{S_2}) = x(s^{-1}) = 0.  \\
y(e) & =  \tau_q^1(x)(e) =   \mu_q^1(x \vert_{S_2}) =  1. 
\end{align*}
This implies that $(s \cdot y) \vert_{S_1} \neq p$, so 
\[ \tau_p^1(y)(s^{-1}) = \mu_p^1( (s \cdot y) \vert_{S_1}) = y(s^{-1}) = 0.  \]
However, $(s \cdot x) \vert_{S_1} = p$, so
\[ \tau_p^1(x)(s^{-1}) = \mu_p^1( (s \cdot x) \vert_{S_1}) = 1. \]
This shows that $\tau_p^1(y) \neq\tau_p^1(x)$, so $(x,y) \not \in \ker(\tau_p^1)$. By Theorem \ref{order-char}, $\tau_p^1 \not \leq \tau_q^1$. 
\end{example}

However, the constant patterns may be used to define an infinite increasing chain of idempotents in $\CA(G;A)$ if the domains of the patterns do not include inverses. This is the key idea in the next proof.   

\begin{theorem}
Suppose that $G$ contains an element of infinite order. Then, there is an infinite increasing chain of idempotents in $\CA(G;A)$.
\end{theorem}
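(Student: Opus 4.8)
The plan is to realise the chain by a sequence of constant (hence idempotent) patterns whose domains grow in a single ``forward'' direction, so that no inverses ever enter the picture. Fix an element $s \in G$ of infinite order, and for each $n \geq 1$ set
\[ S_n := \{ s^k : 0 \leq k \leq n \} = \{ e, s, s^2, \dots, s^n \}. \]
Because $s$ has infinite order, the powers $s^0, \dots, s^n$ are pairwise distinct and $s^{-k} \notin S_n$ for every $k \geq 1$; thus each $S_n$ has size $n+1 \geq 2$, contains $e$, and contains none of the inverses of its nontrivial elements. Let $p_n : S_n \to A$ be the constant pattern $0^{S_n}$, and abbreviate $\tau_n := \tau_{p_n}^1$. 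By Corollary \ref{const-pattern}, each $\tau_n$ is idempotent. Since $S_n \subseteq S_{n+1}$ and $p_n = p_{n+1}\vert_{S_n}$, we have $p_n \leq p_{n+1}$ in the order of patterns, and hence $X_{p_n} \subseteq X_{p_{n+1}}$. I will show that $\tau_1 < \tau_2 < \cdots$ is the desired chain.

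To prove $\tau_n \leq \tau_{n+1}$ I would invoke Theorem \ref{order-char}, which reduces the task to the two inclusions $X_{p_n} \subseteq X_{p_{n+1}}$ and $\ker(\tau_{n+1}) \subseteq \ker(\tau_n)$. The first is already in hand. For the second, Lemma \ref{le-im-ker}(2) lets me replace it by the single composition identity $\tau_n \circ \tau_{n+1} = \tau_n$, which I would verify directly, cell by cell. Writing the local action explicitly, $\tau_n(x)(g) = 1$ exactly when $x(g) = x(gs) = \dots = x(gs^n) = 0$, and $\tau_n(x)(g) = x(g)$ otherwise. Put $u := \tau_{n+1}(x)$. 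Two elementary observations drive the computation: first, $u(g) = x(g)$ whenever at least one of $x(g), \dots, x(gs^n)$ equals $1$, since then the length-$(n+2)$ window at $g$ cannot be all zero; and second, $u(g) = 1$ whenever $x(g) = 1$.

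The verification then splits according to whether the length-$(n+1)$ window of $x$ at $g$ is all zero. If it is not, some $x(gs^{k_0}) = 1$ with $k_0 \leq n$, whence $u(gs^{k_0}) = 1$ sits inside the window read by $\tau_n$ at $g$, so $\tau_n(u)(g) = u(g) = x(g) = \tau_n(x)(g)$. If the window is all zero, then $\tau_n(x)(g) = 1$, and one must check that $\tau_n(u)(g) = 1$ as well; this boundary situation is the main obstacle. The delicate sub-case is $x(g) = \dots = x(gs^n) = 0$ but $x(gs^{n+1}) = 1$: here $\tau_{n+1}$ could a priori manufacture a stray $1$ somewhere in $u(g), \dots, u(gs^n)$ and thereby spoil the match. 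This is precisely where the absence of inverses is essential: since $S_n$ only looks ``forward'', for every $k \in \{0, \dots, n\}$ the offending coordinate $gs^{n+1}$ lies in the length-$(n+2)$ window at $gs^k$, so that window is not all zero and $u(gs^k) = x(gs^k) = 0$; hence the window read by $\tau_n$ at $g$ is all zero and $\tau_n(u)(g) = 1$, as required. (Had $S_n$ contained an inverse, the window would also extend ``backward'' past $gs^{n+1}$ and this step would fail, which is exactly the phenomenon exhibited in the Example preceding this theorem.)

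Finally, I would record that the chain is strictly increasing: the minimal memory set of $\tau_n$ is $S_n$, so the $\tau_n$ are pairwise distinct, and combined with $\tau_n \leq \tau_{n+1}$ this yields $\tau_n < \tau_{n+1}$ for every $n$. Thus $\{ \tau_n : n \geq 1 \}$ is an infinite increasing chain of idempotents in $\CA(G;A)$, completing the argument.
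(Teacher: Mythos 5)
Your proposal is correct and follows essentially the same route as the paper: the identical family of constant-zero patterns on the one-sided sets $\{e, s, \dots, s^n\}$, idempotency from Corollary \ref{const-pattern}, and a direct case analysis establishing $\tau_n \circ \tau_{n+1} = \tau_n$ combined with Lemma \ref{le-im-ker} and Theorem \ref{order-char}; your window-at-$g$ bookkeeping is just the paper's local-rule computation transported by $G$-equivariance. The only (harmless) differences are that you leave implicit the easy sub-case where the full length-$(n+2)$ window is zero (there $u(g)=1$ forces $\tau_n(u)(g)=u(g)=1$ directly), and that you add an explicit strictness argument via minimal memory sets which the paper does not spell out.
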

\begin{proof}
Let $s \in G$ be an element of infinite order. For $i \in \mathbb{N}$, define $S_i := \{e, s, \dots, s^i \}$. Define constant patterns $p_i : S_i \to A$ by $p_i(s^k) = 0$, for all $s^k \in S_i$. We will show that $\tau_{p_i}^1 \leq \tau_{p_{i+1}}^1$, for all $i \in \mathbb{N}$. 

Clearly, $p_i \leq p_{i+1}$, so $X_{p_i} \subseteq X_{p_{i+1}}$. We will show that $\tau_{p_i}^1 = \tau_{p_i}^1 \circ \tau_{p_{i+1}}^1$, so we may conclude by Lemma \ref{le-im-ker} and Theorem \ref{order-char} that $\tau_{p_i}^1 \leq \tau_{p_{i+1}}^1$.

Let $\tau_i := \tau_{p_i}^1$ and $\mu_i := \mu_{p_i}^1$. Observe that $\tau_i = \tau_i \tau_{i+1}$ holds if and only if, for all $x \in A^G$,
\begin{equation} \label{eq-ex}
 \mu_{i}( x \vert_{S_i}) = \mu_{i}( \mu_{i+1}( (s^{-k} \cdot x) \vert_{S_{i+1}})_{s^k \in S_i}  ). 
\end{equation}
We have a few cases.
\begin{itemize}
\item $x \vert_{S_i} = p_i$. In this case, we have
\[ \mu_i( x \vert_{S_i}) =  1.  \]

\begin{itemize}
\item[a)] Suppose that $x \vert_{S_{i+1}} = p_{i+1}$. Then $\mu_{i+1}( x \vert_{S_{i+1}}) = 1$, so the right-hand-side (RHS) of (\ref{eq-ex}) must be equal to $1$. 

\item[b)] Suppose that $x \vert_{S_{i+1}} \neq p_{i+1}$. Since $x(s^k) = 0$, for all $k \in \{0, \dots, i\}$, we must have $x(s^{i+1}) \neq 0$. Hence, for all $s^k \in S_i$, we have $(s^{-k} \cdot x) \vert_{S_{i+1}} \neq p_{i+1}$, since $(s^{-k} \cdot x)(s^{i+1-k}) = x(s^{i+1}) \neq 0 = p_{i+1}(s^{i+1-k})$. Thus, for all $s^k \in S_i$,
\[ \mu_{i+1}( (s^{-k} \cdot x) \vert_{S_{i+1}}) = (s^{-k} \cdot x)(e) = x(s^k) = 0.     \]
Therefore, the RHS of (\ref{eq-ex}) must be equal to $1$, as the input for $\mu_i$ there is $p_i$. 
\end{itemize}
\item $x \vert_{S_i} \neq p_i$. In this case, we have
\[ \mu_{i}( x \vert_{S_i}) = x(e).  \]
\begin{itemize}
\item[c)] Suppose that $x(e) = 0$. Since $x \vert_{S_i} \neq p_i$, then $x(s^k) \neq 0$ for some $s^k \in S_i$. This implies that $(s^{-k} \cdot x) \vert_{S_{i+1}} \neq p_{i+1}$ (because $(s^{-k} \cdot x)(e)=x(s^k)\neq 0$), so 
\[ \mu_{i+1}( (s^{-k} \cdot x) \vert_{S_{i+1}}) =(s^{-k} \cdot x)(e) = x(s^k)\neq 0. \]
Therefore, the input $\mu_i$ in the RHS of (\ref{eq-ex}) is not equal to $p_i$. As $x \vert_{S_{i+1}} \neq p_{i+1}$ (because $x(s^k) \neq 0$), the RHS of (\ref{eq-ex}) must be equal to
\[  \mu_{i+1}(  x \vert_{S_{i+1}}) = x(e) = 0.  \] 

\item[d)] Suppose that $x(e) \neq 0$. Hence, $x \vert_{S_{i+1}} \neq p_{i+1}$, so $\mu_{i+1}( x \vert_{S_{i+1}})  = x(e) \neq 0$. Therefore, the RHS of (\ref{eq-ex}) must be equal to $x(e)$, as the input for $\mu_{i}$ there is not equal to $p_i$. 
\end{itemize}
\end{itemize}
Therefore, equation (\ref{eq-ex}) holds in all the cases, and the result follows. 
\end{proof}


\section*{Acknowledgments}

The second and third authors were supported by CONAHCYT \emph{Becas nacionales para estudios de posgrado}.


\end{document}